\providecommand{\U}[1]{\protect\rule{.1in}{.1in}}
\title[Topological Hochschild homology]{Topological Hochschild
homology \\and the Bass trace conjecture}
\author{A. J.}{Berrick}{}{Singapore}
\author{Lars}{Hesselholt}{}{Nagoya}
\numberwithin{equation}{section}
\theoremstyle{plain}
\newtheorem{lettertheorem}{Theorem}
\newtheorem{cor}[equation]{Corollary}
\newtheorem{lemma}[equation]{Lemma}
\newtheorem{prop}[equation]{Proposition}
\theoremstyle{remark}
\newtheorem{example}[equation]{Example}
\begin{document}

\begin{abstract}
We use the methods of topological Hochschild homology to shed new light on
groups satisfying the Bass trace conjecture. Factorization of the
Hattori-Stallings rank map through the B\"{o}kstedt-Hsiang-Madsen cyclotomic
trace map leads to Linnell's restriction on such groups. As a new consequence
of this restriction, we show that the conjecture holds for any group $G$
wherein every subgroup isomorphic to the additive group of rational numbers
has nontrivial and central image in some quotient of $G$.
\end{abstract}


\section*{Introduction}

Let $A$ be a ring, let $K_{0}(A)$ be the projective class group, and let
$\operatorname{HH}_{0}(A)$ be the zeroth Hochschild homology group defined by
the quotient $A/[A,A]$ of $A$ by the additive subgroup $[A,A]$ generated by
all elements of the form $ab-ba$ with $a,b \in A$. The Hattori-Stallings
rank~\cite{hattori,stallings}, also known as the Hattori-Stallings trace, is
the map
$$r \colon K_{0}(A) \longrightarrow\operatorname{HH}_{0}(A)$$
that takes the class of the finitely generated projective right $A$-module $P$
to the image $r(P)$ of the identity endomorphism of $P$ by the trace
homomorphism
$$\smash{ \xymatrix{
{ \operatorname{Hom}_{A}(P,P) } &
{ P \otimes_A \operatorname{Hom}_{A}(P,A) } \ar[l]_-{\sim}
\ar[r]^-{\operatorname{ev}} &
{ A/[A,A]. } \cr
} }$$
Let $G$ be a discrete group, and let $\mathbb{Z}[G]$ be the integral group
ring of $G$. In this case, the group $\operatorname{HH}_{0}(\mathbb{Z}[G])$ is
canonically isomorphic to the free abelian group generated by the set $C(G)$
of conjugacy classes of elements in $G$. Hence, we can write $r(P)$ uniquely
as
$$r(P) = \sum_{[g]\in C(G)}r(P)(g)[g]$$
with $r(P)(g) \in\mathbb{Z}$. It was conjectured by Bass~\cite[Strong
Conjecture]{bass1} that $r(P)(g)$ is always zero, unless $g=1$, and it is this
conjecture that we refer to as the Bass trace conjecture.

In this paper, we consider the following factorization of the
Hattori-Stallings rank through the cyclotomic trace map of
B\"{o}kstedt-Hsiang-Madsen~\cite{bokstedthsiangmadsen}.
$$\smash{ \xymatrix{
{ K_{0}(A) } \ar[r]^-{\operatorname{tr}} &
{ \operatorname{TR}_{0}(A) } \ar[r] &
{ \operatorname{HH}_{0}(A) } \cr
} }$$
The middle group is equipped with operators $F_s$ called the Frobenius
operators, one for every positive integer $s$, and the image of the
cyclotomic trace map is contained in the subgroup fixed by all
Frobenius operators. In the case $A = \mathbb{Z}[G]$, we show that
this leads to the following restriction on the elements $g\in G$ for
which the Hattori-Stallings rank $r(P)(g)$ can be nonzero.

\begin{lettertheorem}\label{linnell}Let $G$ be a group, let $g \in G$
and suppose that $r(P)(g)\neq0$, for some finitely generated
projective $\mathbb{Z}[G]$-module $P$. Then there exists a positive
integer $m=m(g)$ such that for all positive integers $s$, the elements
$g$ and $g^{s^{m}}$ are conjugate.
\end{lettertheorem}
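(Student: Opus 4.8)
The plan is to exploit the factorization $K_{0}(\mathbb{Z}[G])\xrightarrow{\operatorname{tr}}\operatorname{TR}_{0}(\mathbb{Z}[G])\to\operatorname{HH}_{0}(\mathbb{Z}[G])$ of the Hattori--Stallings rank together with the fact, recalled above, that the image of the cyclotomic trace is fixed by every Frobenius operator $F_{s}$. Write $\pi\colon\operatorname{TR}_{0}(\mathbb{Z}[G])\to\operatorname{HH}_{0}(\mathbb{Z}[G])$ for the canonical map, so that $\pi(\operatorname{tr}(P))=r(P)$, and set $w_{s}:=\pi\circ F_{s}$. Since $F_{s}(\operatorname{tr}(P))=\operatorname{tr}(P)$ for all $s\ge 1$, we get $w_{s}(\operatorname{tr}(P))=\pi(\operatorname{tr}(P))=r(P)$ for every $s\ge 1$; in other words the "ghost vector" $(w_{s}(\operatorname{tr}(P)))_{s\ge 1}$ of $\operatorname{tr}(P)$ is the constant vector $(r(P),r(P),\dots)$.

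The core of the argument is the identification of $\operatorname{TR}_{0}(\mathbb{Z}[G])$ together with its operators $F_{s}$. I would start from the equivalence of cyclotomic spectra $\operatorname{THH}(\mathbb{Z}[G])\simeq\operatorname{THH}(\mathbb{Z})\wedge\Sigma^{\infty}_{+}\mathcal{L}(BG)$ and the decomposition $\mathcal{L}(BG)\simeq\coprod_{[g]\in C(G)}BZ_{G}(g)$ of the free loop space into components indexed by conjugacy classes, under which loop rotation preserves each summand while the cyclotomic structure map (the identification of $C_{p}$-geometric fixed points with $\operatorname{THH}(\mathbb{Z}[G])$ again) is the one recording the $p$-th power map $[g]\mapsto[g^{p}]$ on components. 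This should identify $\operatorname{TR}_{0}(\mathbb{Z}[G])$ with the big Witt vectors of $\operatorname{HH}_{0}(\mathbb{Z}[G])=\mathbb{Z}[C(G)]$, twisted so that the $\mathbb{Z}$-linear power operations $\psi^{s}\colon[g]\mapsto[g^{s}]$ play the role of Frobenius lifts; with this identification $\pi=w_{1}$ is the first ghost map and the $w_{s}$ are the remaining ghost maps. Now apply the integrality of Witt coordinates: if the ghost vector of an element of $\operatorname{TR}_{0}(\mathbb{Z}[G])$ is the constant vector $(v,v,\dots)$, then an induction on $k$ shows that for $k\ge 1$ its Witt coordinate indexed by $p^{k}$ equals $(v-\psi^{p}(v))/p^{k}$ in $\mathbb{Z}[C(G)]$, so $v-\psi^{p}(v)$ lies in $p^{k}\,\mathbb{Z}[C(G)]$ for every $k$ and hence, $\mathbb{Z}[C(G)]$ being free abelian, $v=\psi^{p}(v)$. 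Taking $v=r(P)$ yields $\psi^{p}(r(P))=r(P)$ for every prime $p$, and therefore $\psi^{s}(r(P))=r(P)$ for every $s\ge 1$.

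The passage from this $\psi$-invariance to the conjugacy statement is then elementary. Let $S:=\{[h]\in C(G):r(P)(h)\neq 0\}$; it is finite, say of cardinality $N$. From $r(P)=\psi^{s}(r(P))=\sum_{[h]\in S}r(P)(h)\,[h^{s}]$ we see that $S=\operatorname{supp}(r(P))$ is contained in $\{[h^{s}]:[h]\in S\}$, a set of at most $N$ elements, so $\{[h^{s}]:[h]\in S\}=S$ and $\psi^{s}$ restricts to a permutation of $S$. Since $\psi^{st}=\psi^{s}\circ\psi^{t}$ and $\psi^{1}=\operatorname{id}$, the assignment $s\mapsto\psi^{s}|_{S}$ is a homomorphism of monoids from $(\mathbb{Z}_{\ge 1},\times)$ to the finite group $\operatorname{Sym}(S)$; let $H$ be its (finite) image and put $m:=|H|$. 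Then $\psi^{q^{m}}|_{S}=(\psi^{q}|_{S})^{m}=\operatorname{id}$ for every prime $q$, hence $\psi^{s^{m}}|_{S}=\operatorname{id}$ for every $s\ge 1$. In particular, since $[g]\in S$, we obtain $[g^{s^{m}}]=[g]$ for all $s\ge 1$, i.e. $g$ and $g^{s^{m}}$ are conjugate.

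I expect the main obstacle to be the middle step: computing $\operatorname{TR}_{0}(\mathbb{Z}[G])$ and its Frobenius operators precisely enough to see that the power operations $\psi^{p}$ enter the integrality constraints on Witt coordinates, so that the constant ghost vector forces $\psi^{p}$-invariance of $r(P)$ rather than merely a congruence modulo $p$. The first step is formal, and the last is purely combinatorial.
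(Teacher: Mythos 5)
Your proposal is correct, and its skeleton is the paper's own: factor the Hattori--Stallings rank through $\operatorname{TR}_0$ (Lemma~\ref{refineshattoristallings}), use that the cyclotomic trace is fixed by every Frobenius (Lemma~\ref{imageoftracefixed}), and then exploit the explicit structure of $\operatorname{TR}_0(\mathbb{Z}[G])$ with its $F$-operators. The one place where you genuinely diverge is the source of that structure: you would extract it from the cyclotomic splitting of topological Hochschild homology of $\mathbb{Z}[G]$ over the free loop space $\mathcal{L}(BG)\simeq\coprod_{[g]}BZ_G(g)$, whereas the paper proves exactly the statement you need --- $\operatorname{TR}_0^r(\mathbb{Z}[G])$ is free on the elements $V_t([g]_{r/t})$, with $R_s$, $F_s$, $V_s$ acting as in Lemmas~\ref{relations} and~\ref{bracket} --- by induction on $r$ from the exact sequence of Proposition~\ref{exactsequence}; this is Proposition~\ref{TR_0^r} and Corollary~\ref{TR_0}. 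You correctly flag this identification as the main obstacle: your loop-space route is the B\"okstedt--Hsiang--Madsen picture and is viable, but as written it is a sketch, and one still has to verify the Verschiebung basis, the relations $F_pV_t=V_tF_p$ for $p\nmid t$ versus $F_pV_t=pV_{t/p}$ for $p\mid t$, and the limit description of Corollary~\ref{TR_0}; the paper's exact-sequence argument delivers precisely these facts with comparatively light machinery. Granting that input, your middle step is sound but is essentially the paper's argument in ghost coordinates: your induction showing that a constant ghost vector forces the $p^k$-coordinate to equal $(v-\psi^p(v))/p^k$ (which I checked telescopes correctly against the twisted ghost formula $w_s=\sum_{t\mid s}t\,\psi^{s/t}\circ\pi_t$) is the same infinite-$p$-divisibility-in-a-free-abelian-group mechanism by which the paper shows $a_{t,[g]}=0$ for all $t>1$ before reading off $\psi^s$-invariance of $r(P)$. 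Your endgame also matches the paper's, with an immaterial difference: you take $m$ to be the order of the image of $s\mapsto\psi^s|_S$ in $\operatorname{Sym}(S)$ (and your argument that this image is a finite group, hence $\psi^{s^m}|_S=\operatorname{id}$, is fine), while the paper simply takes $m=\operatorname{lcm}(1,\dots,n)$ so that every permutation of the support has order dividing $m$. In short: correct modulo the structural computation of $\operatorname{TR}_0(\mathbb{Z}[G])$, for which you propose a different (only sketched) route to what the paper establishes as Proposition~\ref{TR_0^r} and Corollary~\ref{TR_0}; everything downstream coincides with the paper's proof up to repackaging.
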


This restriction is not new, but was found earlier by
P.~A.~Linnell~\cite[Lemma~4.1]{linnell} by very different methods,
using work of G.~H.~Cliff~\cite{cliff} on matrices over group
rings. A well-known consequence of the restriction is that if a
nontrivial element $g$ has $r(P)(g)\neq0$, then $g$ lies in a subgroup
isomorphic to the additive group of rationals $\mathbb{Q}$. We present
the following stronger, new consequence.

\begin{lettertheorem}
\label{residuallycentral}Let $G$ be a group such that every subgroup of $G$
that is isomorphic to $\mathbb{Q}$ has nontrivial and central image in some
quotient of $G$. Then the Bass trace conjecture holds for the group $G$.
\end{lettertheorem}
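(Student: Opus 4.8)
The plan is to argue by contradiction, invoking Theorem~\ref{linnell} twice: first to place a would-be counterexample element inside a copy of $\mathbb{Q}$, then again inside the quotient furnished by the hypothesis, where centrality forces the conjugacy relations of Theorem~\ref{linnell} to degenerate into a numerical identity. Assume the Bass trace conjecture fails for $G$, so there are $g \neq 1$ and a finitely generated projective $\mathbb{Z}[G]$-module $P$ with $r(P)(g) \neq 0$. By Theorem~\ref{linnell} there is a positive integer $m$ with $g$ conjugate to $g^{s^{m}}$ for every positive integer $s$. This already forces $g$ to have infinite order: if $g$ had finite order $n$, the choice $s = n$ would make $g$ conjugate to $g^{n^{m}} = 1$, hence $g = 1$.

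I would next extract from Theorem~\ref{linnell} a sharper form of the known consequence that $g$ lies in a subgroup isomorphic to $\mathbb{Q}$, one that retains conjugacy information. Construct inductively elements $g = g_{0}, g_{1}, g_{2}, \dots$ of $G$, each conjugate to $g$, with $g_{n}^{(n+1)^{m}} = g_{n-1}$: as $g_{n-1}$ is conjugate to $g$, the conclusion of Theorem~\ref{linnell} holds for $g_{n-1}$ with the same exponent $m$ (conjugate elements share the exponent), so there is $a_{n} \in G$ with $a_{n}g_{n-1}a_{n}^{-1} = g_{n-1}^{(n+1)^{m}}$, and then $g_{n} := a_{n}^{-1}g_{n-1}a_{n}$ satisfies $g_{n}^{(n+1)^{m}} = g_{n-1}$ and is conjugate to $g$. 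Since $g_{i}$ is a power of $g_{j}$ whenever $i \leq j$, the subgroup $Q := \langle g_{0}, g_{1}, g_{2}, \dots \rangle$ is abelian and equals the increasing union $\bigcup_{n} \langle g_{n} \rangle$ of infinite cyclic groups (infinite cyclic because each $g_{n}$, being conjugate to $g$, has infinite order), in which $\langle g_{n-1} \rangle$ has index $(n+1)^{m}$ in $\langle g_{n} \rangle$. Hence $Q \cong \varinjlim\bigl(\mathbb{Z} \xrightarrow{2^{m}} \mathbb{Z} \xrightarrow{3^{m}} \mathbb{Z} \to \cdots\bigr) = \bigcup_{n} \tfrac{1}{(n!)^{m}}\mathbb{Z} = \mathbb{Q}$, with $g$ corresponding to a nonzero element.

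Now I would apply the hypothesis to $Q$: there is a normal subgroup $N$ of $G$ so that, writing $\overline{G} = G/N$ and $\overline{x}$ for the image of $x$, the subgroup $\overline{Q} := QN/N$ is nontrivial and central in $\overline{G}$. Since $\overline{Q}$ is generated by the $\overline{g_{n}}$ and is nontrivial, some $\overline{g_{k}}$ is nontrivial. Because $g_{k}$ is conjugate to $g$ we have $r(P)(g_{k}) = r(P)(g) \neq 0$ and $g_{k} \in Q$, and $\overline{g_{k}}$ is a nontrivial central element of $\overline{G}$. Applying Theorem~\ref{linnell} to $g_{k}$ (with the same $m$, as $g_{k}$ is conjugate to $g$), the element $g_{k}$ is conjugate in $G$ to $g_{k}^{s^{m}}$ for all $s$, so $\overline{g_{k}}$ is conjugate in $\overline{G}$ to $\overline{g_{k}}^{\,s^{m}}$; centrality then gives $\overline{g_{k}} = \overline{g_{k}}^{\,s^{m}}$, i.e.\ $\overline{g_{k}}^{\,s^{m}-1} = 1$ for every positive integer $s$. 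Thus the order of $\overline{g_{k}}$ divides $s^{m}-1$ for all $s$; taking $s$ to be any prime $p$ and using $p \nmid p^{m}-1$, no prime divides this order, so $\overline{g_{k}} = 1$, contradicting the choice of $k$. Therefore the Bass trace conjecture holds for $G$.

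The delicate point, and the reason for working entirely with conjugacy, is that one cannot simply push $r(P)$ forward along $\mathbb{Z}[G] \to \mathbb{Z}[\overline{G}]$: distinct conjugacy classes of $G$ may coalesce in $\overline{G}$ and their coefficients cancel, so $r(P)(g) \neq 0$ need not survive in $\operatorname{HH}_{0}(\mathbb{Z}[\overline{G}])$. What does survive is the conjugacy data of Theorem~\ref{linnell}, and centrality in $\overline{G}$ is exactly what converts it into $\overline{g_{k}}^{\,s^{m}-1}=1$; this is why the hypothesis is stated in terms of \emph{central} images. The remaining work is mostly careful bookkeeping in the second step: keeping the auxiliary roots $g_{n}$ honestly conjugate to $g$ (so the hypothesis is applied to a genuine copy of $\mathbb{Q}$ whose nontrivial quotient elements are still conjugates of $g$), and verifying that the resulting tower of infinite cyclic subgroups has colimit precisely $\mathbb{Q}$ rather than merely a torsion-free rank-one group.
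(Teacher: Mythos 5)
Your proof is correct, and its overall skeleton is the one the paper uses: apply Theorem~\ref{linnell} to a would-be counterexample $g$, place $g$ in a subgroup isomorphic to $\mathbb{Q}$ generated by conjugates of $g$, feed that subgroup to the hypothesis, and let centrality in the resulting quotient collide with the conjugacy relations. The differences are in how the two key steps are implemented. For the $\mathbb{Q}$-subgroup, the paper simply cites Linnell and Emmanouil (parts (i)--(iv) of Corollary~\ref{gQdescribed}), whereas you rebuild it by the explicit root tower $g_n^{(n+1)^m}=g_{n-1}$ of conjugates of $g$; your construction is sound and has the advantage of being self-contained, and of making visible exactly the property you need later, namely that the generators are conjugates of $g$. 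For the final step, the paper's pivot is the new part (v) of Corollary~\ref{gQdescribed}: from $g\sim g^{2^m}$ and $g\sim g^{s^m}$ with $s=2^m-1$ one gets $g^{2^m-1}=[x,g]$, $g^{s^m-1}=[y,g]$, and the B\'ezout identity gives $g=[x,g]^k[y,g]^l\in[G,g]$, so $C\subseteq[G,C]$; centrality of $\bar C$ in $G/N$ then means $[G,C]\subseteq N$, killing $\bar C$ in one line. You instead pass to the quotient first, use centrality to convert $\bar g_k\sim\bar g_k^{\,s^m}$ into $\bar g_k^{\,s^m-1}=1$ for all $s$, and finish with the coprimality observation that no prime $p$ divides $p^m-1$. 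The two finishes are at bottom the same coprimality phenomenon among the numbers $s^m-1$, packaged once inside $G$ (paper) and once elementwise in the quotient (you); the paper's packaging has the side benefit of the structural statement about normal closures, $C\subseteq[G,C]=[G,g]$, which it records and reuses, while yours avoids any appeal to Corollary~\ref{gQdescribed}. Your closing remark about why one cannot push $r(P)$ forward to $\mathbb{Z}[G/N]$ correctly identifies the reason the argument must run through conjugacy data rather than through functoriality of the rank.
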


In particular, the Bass trace conjecture holds for any group such that the
intersection of the transfinite lower central series contains no copy of
$\mathbb{Q}$. This is proved in Section~\ref{section3} below, where other
consequences are explored, including the relation between the strong and weak
forms of the Bass trace conjecture; the weak form asserts that $\sum_{[g] \in
C(G) \smallsetminus\{[1]\}} r(P)(g) = 0$.

Below, we use simplicial and cyclic sets and their geometric realization. We
use the notation $X[-]$ to indicate a simplicial or cyclic set with set of
$n$-simplices $X[n]$ and refer the reader to~\cite[Chapter~7]{loday}
and~\cite{drinfeld} for the properties of its geometric realization $|X[-]|$.

\section{The cyclotomic trace map}

\label{section1}

In this section, we recall the cyclotomic trace map and give a
thorough treatment of the factorization of the Hattori-Stallings rank
through said map. We begin with a discussion of the groups
$\operatorname{TR}_q^r(A)$ and of the operators that relate these
groups as the integers $r\geqslant1$ and $q$ vary. The reader is
referred to~\cite[Section~1]{hm4} for further details. The definition
of these groups given in loc.~cit.~uses very large constructions. The
advantage of this is that the various maps that we need all have good 
point-set level models.

Let $\mathscr{C}$ be a (small) exact category in the sense of
Quillen~\cite[\S 2]{quillen}, and let $K(\mathscr{C})$ be the algebraic
$K$-theory spectrum as defined by
Waldhausen~\cite[Section~1.3]{waldhausen}, the definition of which
depends on a choice of null-object $0$ in $\mathscr{C}$. As pointed
out in~\cite[Section~6]{gh}, the spectrum $K(\mathscr{C})$ is a
symmetric spectrum in the sense of~\cite{hoveyshipleysmith}. The
algebraic $K$-groups of $\mathscr{C}$ are defined to be the homotopy
groups
$$K_{q}(\mathscr{C})=[S^{q},K(\mathscr{C})]$$
given by the set of maps in the homotopy category of symmetric spectra
from the suspension spectrum of the sphere $S^{q}$ (which, by abuse of
notation, we again denote by $S^{q}$) to the algebraic $K$-theory
spectrum $K(\mathscr{C})$. For later use, we notice that the zeroth
space of the symmetric spectrum $K(\mathscr{C})$ is defined to be the
geometric realization
$$K(\mathscr{C})_0 = |N(i\mathscr{C})[-]|$$
of the nerve of the subcategory $i\mathscr{C}$ of isomorphisms in
$\mathscr{C}$. There is a canonical map
$$\sigma\colon\pi_{q}(K(\mathscr{C})_{0})\longrightarrow
K_{q}(\mathscr{C})$$
from the homotopy groups of the zeroth space with the chosen
null-object $0$ as the basepoint to the homotopy groups of the
$K$-theory spectrum. It is not an isomorphism, except in trivial
cases, but if $\mathscr{C}$ is split-exact, then it is a
group-completion for $q=0$.

We let $\mathbb{T}$ be the circle group given by the multiplicative
group of complex numbers of modulus $1$. We use the model of the
topological Hochschild $\mathbb{T}$-spectrum $T(\mathscr{C})$ which is
given in~\cite[Section~1.2]{hm4} and which produces a symmetric
orthogonal $\mathbb{T}$-spectrum in the sense
of~\cite{mandellmay,hovey}; it is based on the Dundas-McCarthy
model~\cite[Section~2.1.7]{dundasmccarthy} generalizing B\"{o}kstedt's
original construction~\cite{bokstedt} from rings to exact
categories. The symmetric spectrum structure is used to construct the
cyclotomic trace map, and the orthogonal spectrum structure is used to
define the maps $R_{s}$, $F_{s}$, and $V_{s}$ which we now
explain. First, for every positive integer $r$, the group
$\operatorname{TR}_{q}^{r}(\mathscr{C})$ is defined to be the
equivariant homotopy group
$$\operatorname{TR}_{q}^{r}(\mathscr{C})
= [ S^{q}\wedge(\mathbb{T}/C_{r})_+,\,T(\mathscr{C})]_{\mathbb{T}}$$
given by the set of maps in the homotopy category of symmetric
orthogonal $\mathbb{T}$-spectra from the suspension
$\mathbb{T}$-spectrum of the pointed $\mathbb{T}$-space
$S^q \wedge (\mathbb{T}/C_r)_+$ to $T(\mathscr{C})$. Here 
$C_r \subseteq \mathbb{T}$ denotes the subgroup of order $r$ and the
subscript ``$+$''  indicates the addition of a disjoint basepoint. If
$s$ divides $r$, then there are natural additive maps
$$\begin{aligned}
R_s & \colon \operatorname{TR}_q^r(\mathscr{C}) \longrightarrow
\operatorname{TR}_q^{r/s}(\mathscr{C})  \hskip8mm \text{(restriction)} \cr
F_s & \colon \operatorname{TR}_q^r(\mathscr{C}) \longrightarrow
\operatorname{TR}_q^{r/s}(\mathscr{C}) \hskip8mm \text{(Frobenius)} \cr
V_s & \colon \operatorname{TR}_q^{r/s}(\mathscr{C}) \longrightarrow
\operatorname{TR}_q^r(\mathscr{C}) \hskip8mm \text{(Verschiebung).} \cr
\end{aligned}$$
The map $F_{s}$ is induced by the canonical projection 
$f_s\colon \mathbb{T}/C_{r/s}\rightarrow\mathbb{T}/C_r$, and the map
$V_s$ is induced by the associated transfer map
$v_s \colon S^q \wedge (\mathbb{T}/C_r)_+ \rightarrow S^q \wedge
(\mathbb{T}/C_{r/s})_+$. The definition of the map $R_s$ is more
subtle and uses the so-called cyclotomic structure of
$T(\mathscr{C})$. We briefly recall the definition and refer
to~\cite[Section~2]{hm} for details. In general, if $T$ is a symmetric
orthogonal $\mathbb{T}$-spectrum, then the geometric $C_s$-fixed
point spectrum $\Phi^{C_{s}}T$ is a symmetric orthogonal 
$\mathbb{T}/C_s$-spectrum. In the special case of a suspension
$\mathbb{T}$-spectrum $S^{q}\wedge X$, there is a canonical weak
equivalence of symmetric orthogonal $\mathbb{T}/C_{s}$-spectra
$$i_s \colon S^q \wedge (X^{C_s}) \overset{\sim}{\longrightarrow}
\Phi^{C_s}(S^q \wedge X).$$
The group isomorphism $\rho_s \colon \mathbb{T} \to \mathbb{T}/C_s$
defined by the $s\,$th root gives rise to a change-of-groups functor $\rho
_{s}^{\ast}$ from the category of symmetric orthogonal
$\mathbb{T}/C_{s}$-spectra to the category of symmetric orthogonal
$\mathbb{T}$-spectra. In particular, we obtain the symmetric
orthogonal $\mathbb{T}$-spectrum $\rho_{s}^{\ast}\Phi^{C_{s}}T$. Now,
the cyclotomic structure of the topological Hochschild
$\mathbb{T}$-spectrum $T(\mathscr{C})$ is a collection of compatible
weak equivalences
$$r_s \colon \rho_s^*\Phi^{C_s}T(\mathscr{C}) \longrightarrow
T(\mathscr{C})$$
of symmetric orthogonal $\mathbb{T}$-spectra, one for every positive
integer $s$, and the restriction map
$$R_s \colon \operatorname{TR}_q^r(\mathscr{C}) 
\longrightarrow \operatorname{TR}_q^{r/s}(\mathscr{C})$$
is defined to be the composition
$$\begin{aligned}
{} & \xymatrix{
{ [S^q \wedge (\mathbb{T}/C_r)_+, T(\mathscr{C})]_{\mathbb{T}} }
\ar[r]^-{\Phi^{C_s}} &
{ [\Phi^{C_s}(S^q \wedge (\mathbb{T}/C_r)_+),
\Phi^{C_s}T(\mathscr{C})]_{\mathbb{T}/C_s} } \cr
} \cr
{} & \xymatrix{
{ \phantom{i} } \ar[r]^-{\rho_s^*} &
{ [\rho_s^*\Phi^{C_s}(S^q \wedge (\mathbb{T}/C_r)_+),
\rho_s^*\Phi^{C_s}T(\mathscr{C})]_{\mathbb{T}} } \ar[r]^-{r_{s*}} &
{ [S^q \wedge (\mathbb{T}/C_{r/s})_+, T(\mathscr{C})]_{\mathbb{T}} }
\cr
} \cr
\end{aligned}$$
of the geometric $C_{s}$-fixed point functor, the change-of-groups
isomorphism $\rho_{s}^{\ast}$, and the isomorphism induced by the weak
equivalences $\rho_{s}^{\ast}i_{s}$ and $r_{s}$. We record the
following basic properties of the maps $R_{s}$, $F_{s}$, and $V_{s}$.

\begin{lemma}\label{relations}Let $r$, $s$, and $t$ be positive
integers, and let $d = (s,t)$ be the greatest common divisor. The
following relations \textrm{(i)--(iv)} hold.
\begin{enumerate}
\item[\textrm{(i)}] The maps $R_{1}, F_{1}, V_{1} \colon
\operatorname{TR}_q^r(\mathscr{C}) \to
\operatorname{TR}_q^r(\mathscr{C})$ are the
identity maps.
\item[\textrm{(ii)}] If $st$ divides $r$, then
$$\begin{aligned}
{} & R_s \circ R_t = R_{st}  \colon \operatorname{TR}_q^r(\mathscr{C}) \longrightarrow
\operatorname{TR}_q^{r/st}(\mathscr{C}) \cr
{} & F_s \circ F_t = F_{st} \colon \operatorname{TR}_q^r(\mathscr{C}) \longrightarrow
\operatorname{TR}_q^{r/st}(\mathscr{C}) \cr
{} & V_s \circ V_t = V_{st} \colon \operatorname{TR}_q^{r/st}(\mathscr{C}) \longrightarrow
\operatorname{TR}_q^r(\mathscr{C}) \cr
{} & R_s \circ F_t = F_t \circ R_s \colon \operatorname{TR}_q^r(\mathscr{C}) \longrightarrow
\operatorname{TR}_q^{r/st}(\mathscr{C}) \cr
{} & R_s \circ V_t = V_t \circ R_s \colon \operatorname{TR}_q^{r/t}(\mathscr{C}) \longrightarrow
\operatorname{TR}_q^{r/s}(\mathscr{C}) \cr
\end{aligned}$$
\item[\textrm{(iii)}] If both $s$ and $t$ divide $r$, then
$$F_s \circ V_t = dV_{t/d} \circ F_{s/d} \colon
\operatorname{TR}_q^{r/t}(\mathscr{C})
\longrightarrow \operatorname{TR}_q^{r/s}(\mathscr{C}).$$
\item[\textrm{(iv)}] If both $s$ and $t$ but not $st$ divide $r$, then
$$R_s \circ V_t = 0 \colon \operatorname{TR}_q^{r/t}(\mathscr{C})
\longrightarrow \operatorname{TR}_q^{r/s}(\mathscr{C}).$$
\end{enumerate}
\end{lemma}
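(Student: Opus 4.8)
The plan is to push each of $R_s$, $F_s$, and $V_s$ back to its point-set definition and to reduce the stated relations to elementary identities among maps of pointed $\mathbb{T}$-spaces and symmetric orthogonal $\mathbb{T}$-spectra, supplemented by the coherence of the cyclotomic structure maps $r_s$. Recall that $F_s$ is precomposition with the suspension of the projection $f_s\colon\mathbb{T}/C_{r/s}\to\mathbb{T}/C_r$; that $V_s$ is precomposition with the suspension of the transfer $v_s\colon S^q\wedge(\mathbb{T}/C_r)_+\to S^q\wedge(\mathbb{T}/C_{r/s})_+$ associated to $f_s$; and that $R_s$ is built from the geometric fixed point functor $\Phi^{C_s}$, the change-of-groups functor $\rho_s^{\ast}$, and the isomorphisms induced by the weak equivalences $\rho_s^{\ast}i_s$ and $r_s$. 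Part~(i) is then immediate from the normalizations built into these definitions: $f_1$ and $v_1$ are identity maps, and $\Phi^{C_1}$, $\rho_1^{\ast}$, $i_1$, $r_1$ are the respective identities.

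For part~(ii), the identity $F_s\circ F_t=F_{st}$ holds because projections compose, $f_t\circ f_s=f_{st}$, and $V_s\circ V_t=V_{st}$ holds because the transfer is functorial for composites of finite coverings. The identity $R_s\circ R_t=R_{st}$ follows from the coherence requirement on the cyclotomic structure, namely that $r_s\circ\rho_s^{\ast}\Phi^{C_s}(r_t)$ agree with $r_{st}$ under the standard identifications of iterated geometric fixed points and iterated change-of-groups functors, together with the corresponding compatibility of the maps $i_s$ for suspension spectra. The two mixed relations $R_s\circ F_t=F_t\circ R_s$ and $R_s\circ V_t=V_t\circ R_s$, stated for $st$ dividing $r$, reduce via the naturality of $i_s$ and $r_s$ to the observation that, since $s$ divides both $r$ and $r/t$, the geometric $C_s$-fixed points of the suspension of $f_t\colon\mathbb{T}/C_{r/t}\to\mathbb{T}/C_r$ is, after applying $\rho_s^{\ast}$, again the suspension of a projection of the same type, namely $f_t\colon\mathbb{T}/C_{r/st}\to\mathbb{T}/C_{r/s}$, and likewise that $\Phi^{C_s}$ carries the transfer $v_t$ to the transfer one level down; hence $R_s$ intertwines the Frobenius, respectively the Verschiebung, on level $r$ with the Frobenius, respectively the Verschiebung, on level $r/s$.

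The substantive point is part~(iii). The plan is to form the pullback of $f_s\colon\mathbb{T}/C_{r/s}\to\mathbb{T}/C_r$ along $f_t\colon\mathbb{T}/C_{r/t}\to\mathbb{T}/C_r$ in $\mathbb{T}$-spaces. Because $\mathbb{T}$ is abelian and $C_{r/s}$ and $C_{r/t}$ are subgroups of $C_r$, this pullback is $\mathbb{T}$-isomorphic to a disjoint union of $d=(s,t)$ copies of $\mathbb{T}/C_{rd/(st)}$, the two projections restricting, on each copy, to the standard coverings onto $\mathbb{T}/C_{r/s}$ of degree $t/d$ and onto $\mathbb{T}/C_{r/t}$ of degree $s/d$. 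The base-change property of the transfer for this pullback square identifies, in the homotopy category, the composite of $v_t$ with the suspension of $f_s$ with the composite of the transfer onto the pullback followed by the suspension of the second projection; decomposing the pullback into its $d$ components --- on each of which the first projection underlies $V_{t/d}$ and the second projection underlies $F_{s/d}$, up to rotation automorphisms that are $\mathbb{T}$-homotopic to the identity since $\mathbb{T}$ is connected --- each component contributes the same class, producing the factor $d$ and hence $F_s\circ V_t=d\,V_{t/d}\circ F_{s/d}$. I expect the main obstacle to be setting up the base-change identity for the transfer at the point-set level of the symmetric orthogonal $\mathbb{T}$-spectrum model compatibly with the remaining structure; this is, however, a standard property of the transfer and is available in the references cited above.

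Finally, part~(iv) is a vanishing statement. When $s$ and $t$ both divide $r$ but $st$ does not, the subgroup $C_s$ is not contained in $C_{r/t}$, so $(\mathbb{T}/C_{r/t})^{C_s}=\emptyset$, and the weak equivalence $i_s\colon S^q\wedge\bigl((\mathbb{T}/C_{r/t})^{C_s}\bigr)_+\to\Phi^{C_s}\bigl(S^q\wedge(\mathbb{T}/C_{r/t})_+\bigr)$ exhibits its target as weakly contractible. Since $R_s\circ V_t$ factors through $\Phi^{C_s}$ applied to the transfer $v_t\colon S^q\wedge(\mathbb{T}/C_r)_+\to S^q\wedge(\mathbb{T}/C_{r/t})_+$, whose target becomes weakly contractible after $\Phi^{C_s}$, the composite $R_s\circ V_t$ is zero.
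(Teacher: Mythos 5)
Your proposal is correct, and for parts (i), (ii) and (iv) it follows the same lines as the paper; in particular your argument for (iv) --- functoriality of $\Phi^{C_s}$ plus the observation that $(\mathbb{T}/C_{r/t})^{C_s}=\emptyset$ when $st\nmid r$, so that the relevant mapping group vanishes --- is exactly the paper's proof. Where you genuinely diverge is in (ii)--(iii): the paper disposes of (i)--(iii) by citing \cite[Proposition~3.2]{hm} (compare Lemma~3.3 of loc.~cit.), whereas you reprove the key relation $F_s\circ V_t=dV_{t/d}\circ F_{s/d}$ directly, via the pullback of $f_s$ along $f_t$, its decomposition into $d$ copies of $\mathbb{T}/C_{rd/(st)}$ (your index and degree computations $t/d$ and $s/d$ are right), the base-change (Mackey) property of the equivariant transfer, and the fact that rotations of $\mathbb{T}/C_k$ are $\mathbb{T}$-homotopic to the identity, which produces the factor $d$. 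This buys self-containedness at the cost of redoing what the cited reference already provides: the double-coset formula you invoke is essentially the content of \cite[Proposition~3.2]{hm}. One simplification of your own worry: since $F_s$, $V_s$ and $R_s$ act by (pre)composition on homotopy classes, the base-change identity is only needed in the homotopy category of symmetric orthogonal $\mathbb{T}$-spectra, not at the point-set level, so the ``main obstacle'' you flag is not actually an obstacle. For $R_s\circ R_t=R_{st}$ and the commutations $R_sF_t=F_tR_s$, $R_sV_t=V_tR_s$, your appeal to the compatibility of the cyclotomic maps $r_s$, the maps $i_s$ for suspension spectra, and the fact that $\Phi^{C_s}$ carries the projection $f_t$ and the transfer $v_t$ at level $r$ to their counterparts at level $r/s$ (under the $\rho_s^*i_s$ identifications) is the correct and standard justification, again matching what the citation supplies.
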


\begin{proof}The statements~(i)--(iii) are immediate consequences of
the definitions and of~\cite[Proposition~3.2]{hm}; compare loc.~cit.,
Lemma~3.3. To prove~(iv), we consider the diagram
$$\xymatrix{
{ [ S^q \wedge (\mathbb{T}/C_{r/t})_+, T(\mathscr{C}) ]_{\mathbb{T}} }
\ar[r]^-{\Phi^{C_s}} \ar[d]^-{V_t = v_t^*} &
{ [ \Phi^{C_s}(S^q \wedge (\mathbb{T}/C_{r/t})_+), \Phi^{C_s}T(\mathscr{C})
]_{\mathbb{T}/C_s} } \ar[d]^-{(\Phi^{C_s}v_t)^*} \cr
{ [ S^q \wedge (\mathbb{T}/C_r)_+, T(\mathscr{C}) ]_{\mathbb{T}} }
\ar[r]^-{\Phi^{C_s}} &
{ [ \Phi^{C_s}(S^q \wedge (\mathbb{T}/C_r)_+),
\Phi^{C_s}T(\mathscr{C}) ]_{\mathbb{T}/C_s} } \cr
}$$
which commutes since $\Phi^{C_s}$ is a functor. Moreover, since $s$
was assumed not to divide $r/t$, the fixed set
$(\mathbb{T}/C_{r/t})^{C_s}$ is empty. It follows that the upper
right-hand  term in the diagram is zero, which proves~(iv).
\end{proof}

For completeness, we note that $\operatorname{TR}_{q}^{r}(\mathscr{C})$ is a
module over the ring $\mathbb{W}_{\langle r\rangle}(\mathbb{Z})$ of big Witt
vectors in $\mathbb{Z}$ indexed on the set $\langle r\rangle$ of divisors in
$r$, and that if $s$ divides $r$, then
$$V_s \circ F_s \colon \operatorname{TR}_q^r(\mathscr{C})
\longrightarrow \operatorname{TR}_q^r(\mathscr{C})$$
is equal to multiplication by the element $V_{s}(1)\in\mathbb{W}_{\langle
r\rangle}(\mathbb{Z})$. This fact, however, is not used in this paper. On the
other hand, we need the following result.

\begin{prop}\label{exactsequence}Let $r$ be a positive integer, let
$p$ be a prime number dividing $r$, and write $r = p^{u}d$ with $d$
not divisible by $p$. In this situation, the sequence
$$\xymatrix{
{ \operatorname{TR}_0^d(\mathscr{C}) } \ar[r]^-{V_{p^u}} &
{ \operatorname{TR}_0^r(\mathscr{C}) } \ar[r]^-{R_p} &
{ \operatorname{TR}_0^{r/p}(\mathscr{C}) } \ar[r] &
{ 0 } \cr
}$$
is exact.
\end{prop}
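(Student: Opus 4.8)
The plan is to identify the sequence with the standard localization (or isotropy-separation) sequence in equivariant stable homotopy theory, applied to the $\mathbb{T}$-spectrum $T(\mathscr{C})$ and the family of subgroups of $C_r$ properly contained in $C_r$. More concretely, I would argue as follows. Since $r = p^u d$ with $p \nmid d$, the cyclic group $C_r \cong C_{p^u} \times C_d$, and the subgroups of $C_r$ that do not contain $C_{p^u}$ are precisely the subgroups contained in $C_{r/p}$. For a based $C_r$-CW complex, smashing with the cofiber sequence $(E\mathcal{F})_+ \to S^0 \to \widetilde{E\mathcal{F}}$ for the family $\mathcal{F}$ of subgroups of $C_r$ not containing $C_{p^u}$, and then mapping out of it into $T(\mathscr{C})$, yields a long exact sequence. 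The term built from $\widetilde{E\mathcal{F}}$ computes, via the geometric fixed point functor $\Phi^{C_{p^u}}$ together with the cyclotomic structure map $r_{p^u}$, a group of the form $\operatorname{TR}_0^{r/p^u}(\mathscr{C}) = \operatorname{TR}_0^d(\mathscr{C})$, and the connecting-type map into $\operatorname{TR}_0^r(\mathscr{C})$ is, by the definitions recalled above, exactly the Verschiebung $V_{p^u}$. The term built from $(E\mathcal{F})_+$ similarly computes $\operatorname{TR}_0^{r/p}(\mathscr{C})$ with the quotient map being $R_p$ (note $R_p = R_p \circ R_1$ and one only needs to pass one stage down the tower to reach the $\mathcal{F}$-part, since $R_p$ already factors through restriction to the proper subgroups). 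Exactness of the resulting piece, together with the fact that we are in degree $q=0$ so that no $\partial$ to $\operatorname{TR}_{-1}$ obstructs surjectivity of $R_p$, gives the claim.

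The cleanest way to organize this is probably to cite the analogous statement for rings due to Hesselholt--Madsen --- the fundamental cofibration sequence relating $\operatorname{TR}^r$, $\operatorname{TR}^{r/p}$, and (via $\Phi^{C_{p^u}}$ and the cyclotomic structure) $\operatorname{TR}^d$ --- which is established in \cite[Section~2]{hm} and \cite[Section~1]{hm4} at the level of spectra, and observe that it applies verbatim to the exact category setting because the model of $T(\mathscr{C})$ used here is a genuine symmetric orthogonal $\mathbb{T}$-spectrum with the required cyclotomic structure. From that cofibration sequence of spectra one extracts the long exact sequence of homotopy groups, and then reads off that the relevant three consecutive terms are $\operatorname{TR}_0^d(\mathscr{C})$, $\operatorname{TR}_0^r(\mathscr{C})$, $\operatorname{TR}_0^{r/p}(\mathscr{C})$ with the maps $V_{p^u}$, $R_p$; the segment terminates in $0$ on the right because the next term, $\operatorname{TR}_{-1}^d(\mathscr{C})$, need not vanish, but it maps into $\operatorname{TR}_{-1}^r$ and does not obstruct surjectivity of $R_p$ onto $\operatorname{TR}_0^{r/p}$ --- the surjectivity is exactly the statement that $R_p$ hits everything, which follows because the piece of the long exact sequence one position to the right, coming from $\operatorname{TR}_{-1}$ of the $\widetilde{E\mathcal{F}}$-part, sits after the term $\operatorname{TR}_0^{r/p}(\mathscr{C})$.

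I expect the main obstacle to be bookkeeping rather than anything deep: one must make sure that the connecting map in the cofibration sequence really is the Verschiebung $V_{p^u}$ and not, say, $V_p$ composed with other transfers, and that the identification of $\Phi^{C_{p^u}}$ of the $\widetilde{E\mathcal{F}}$-smash with $T(\mathscr{C})$ indexed on divisors of $d$ is the right one. This amounts to unwinding the definition of $R_s$ recalled above (the composite of $\Phi^{C_s}$, the change-of-groups $\rho_s^*$, and the structure maps $r_s$, $i_s$) and matching it against the filtration of $(\mathbb{T}/C_r)_+$ by isotropy; the relations in Lemma~\ref{relations}, especially $R_s \circ R_t = R_{st}$ and $R_s \circ V_t = 0$ when $s,t \mid r$ but $st \nmid r$, are the combinatorial input that pins down the indexing. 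Once the cofibration sequence is in hand with the maps correctly identified, the exactness assertion is immediate.
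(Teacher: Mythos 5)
Your high-level instinct is the same as the paper's: quote a fundamental cofibration/isotropy-separation sequence from the literature and read off the claim in degree zero. (The paper cites Angeltveit--Gerhardt--Hesselholt, Proposition~1.1, with $q=0$, $v=1$, $\lambda=0$.) However, as written your identifications are crossed, and the step you dismiss as bookkeeping is where the actual content lies. First, the two sides of the sequence: in the covariant form used in the literature one smashes the cofibration $E_+\to S^0\to\widetilde{E}$ with the \emph{target} $T(\mathscr{C})$; then the $E_+$-term is a Borel homology (homotopy-orbit) term which maps \emph{into} $\operatorname{TR}_0^r(\mathscr{C})$ by the norm map, while the $\widetilde{E}$-term is the one identified through geometric fixed points and the cyclotomic structure, receiving the restriction map. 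You have assigned these roles the other way around, and with your family (subgroups not containing $C_{p^u}$) the geometric side computes $\Phi^{C_{p^u}}$, so the map out of $\operatorname{TR}_0^r(\mathscr{C})$ it produces is $R_{p^u}$ into $\operatorname{TR}_0^d(\mathscr{C})$ --- not $R_p$ into $\operatorname{TR}_0^{r/p}(\mathscr{C})$, which requires a single geometric fixed-point step $\Phi^{C_p}$ and the structure map $r_p$. Your contravariant reading (smashing the cofibration with the source $S^q\wedge(\mathbb{T}/C_r)_+$) does not repair this: $[S^q\wedge(\mathbb{T}/C_r)_+\wedge(E\mathcal{F})_+,T(\mathscr{C})]_{\mathbb{T}}$ is a homotopy-fixed-point-type group, not $\operatorname{TR}_0^{r/p}(\mathscr{C})$, and $[S^q\wedge(\mathbb{T}/C_r)_+\wedge\widetilde{E\mathcal{F}},T(\mathscr{C})]_{\mathbb{T}}$ is not $\operatorname{TR}_0^d(\mathscr{C})$ with the transfer $v_{p^u}^*$ as comparison map.

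Second, and more importantly, even with the correct sequence the left-hand term is not $\operatorname{TR}_0^d(\mathscr{C})$ but the Borel homology group $\mathbb{H}_0(C_{p^u},\operatorname{TR}^d(\mathscr{C}))$, equipped with the norm map $N_{p^u}$ into $\operatorname{TR}_0^r(\mathscr{C})$. The proposition is a degree-zero statement precisely because of the two identifications you omit: the edge homomorphism of the first-quadrant spectral sequence $E^2_{s,t}=H_s(C_{p^u},\operatorname{TR}_t^d(\mathscr{C}))\Rightarrow\mathbb{H}_{s+t}(C_{p^u},\operatorname{TR}^d(\mathscr{C}))$ gives an isomorphism $\operatorname{TR}_0^d(\mathscr{C})\cong\mathbb{H}_0(C_{p^u},\operatorname{TR}^d(\mathscr{C}))$, and by Hesselholt--Madsen, Lemma~3.2, the composite of this edge isomorphism with $N_{p^u}$ is the Verschiebung $V_{p^u}$. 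Without these steps the assertion $\ker R_p=\operatorname{im}V_{p^u}$ on $\pi_0$ does not follow; indeed, in positive degrees the sequence with $\operatorname{TR}_q^d(\mathscr{C})$ in place of the Borel group is false in general, which shows this is not mere indexing. Finally, the surjectivity of $R_p$ (the terminal $0$) comes from the vanishing of $\mathbb{H}_{-1}(C_{p^u},\operatorname{TR}^d(\mathscr{C}))$, i.e.\ from connectivity of the Borel term via the same first-quadrant spectral sequence, not from your positional argument about $\operatorname{TR}_{-1}^d(\mathscr{C})$ ``not obstructing'' surjectivity, which as stated is not a valid exactness argument.
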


\begin{proof}As a particular case
of~\cite[Proposition~1.1]{angeltveitgerhardthesselholt}
with $q = 0$, $v = 1$, and $\lambda = 0$, we have the exact sequence
$$\xymatrix{
{ \mathbb{H}_0(C_{p^u},\operatorname{TR}^d(\mathscr{C})) } \ar[r]^-{N_{p^u}} &
{ \operatorname{TR}_0^r(\mathscr{C}) } \ar[r]^-{R_p} &
{ \operatorname{TR}_0^{r/p}(\mathscr{C}) } \ar[r] &
{ 0. }
}$$
The left-hand term is the $0$th Borel homology group of
$C_{p^u}$ with coefficients in the symmetric orthogonal
$\mathbb{T}$-spectrum $\operatorname{TR}^d(\mathscr{C}) =
\rho_d^*(T(\mathscr{C})^{C_d})$ and is defined by
$$\mathbb{H}_q(C_{p^u},\operatorname{TR}^d(\mathscr{C}))
= [ S^q \wedge (\mathbb{T}/C_{p^u})_+, E_+ \wedge
\rho_d^*(T(\mathscr{C})^{C_d}) ]_{\mathbb{T}},$$
where $E$ is a free $\mathbb{T}$-CW-complex whose underlying space is
contractible. If $E'$ is another such $\mathbb{T}$-CW-complex, then
there is a $\mathbb{T}$-homotopy equivalence $f \colon E \to E'$, the
$\mathbb{T}$-homotopy class of which is unique, and hence, the Borel
homology group is well-defined up to canonical isomorphism. The
skeleton filtration of $E$ gives rise to a first quadrant spectral
sequence
$$E_{s,t}^2 = H_s(C_{p^u}, \operatorname{TR}_t^d(\mathscr{C})) \Rightarrow
\mathbb{H}_{s+t}(C_{p^u}, \operatorname{TR}^d(\mathscr{C}))$$
from the group homology of $C_{p^u}$ with coefficients in the trivial
$C_{p^u}$-module $\operatorname{TR}_t^d(\mathscr{C})$. In particular, the edge
homomorphism defines an isomorphism
$$\xymatrix{
{ \operatorname{TR}_0^d(\mathscr{C}) = H_0(C_{p^u}, \operatorname{TR}_0^d(\mathscr{C})) }
\ar[r]^-{\sim} &
{ \mathbb{H}_0(C_{p^u},\operatorname{TR}^d(\mathscr{C})). } \cr
}$$
Finally, as was noted in~\cite[Lemma~3.2]{hm}, the composition of the edge
homomorphism and the left-hand map $N_{p^u}$ in the exact sequence at
the top of the proof is equal to the Verschiebung map $V_{p^u}$. This
completes the proof.
\end{proof}

We next recall the cyclotomic trace map of
B\"{o}kstedt-Hsiang-Madsen~\cite{bokstedthsiangmadsen}, but use the
technically better construction given by
Dundas-McCarthy~\cite{dundasmccarthy}. The $\mathbb{T}$-fixed point
spectrum $T(\mathscr{C})^{\mathbb{T}}$ is a symmetric orthogonal
spectrum. We further replace the symmetric spectrum $K(\mathscr{C})$
by its suspension symmetric orthogonal spectrum which we again write
$K(\mathscr{C})$; compare~\cite{hovey}. There is a natural map of 
symmetric orthogonal spectra
$$\tau \colon K(\mathscr{C}) \longrightarrow
T(\mathscr{C})^{\mathbb{T}}$$
given by the canonical map defined in~\cite[p.~14, middle]{hm4}. Now the
cyclotomic trace map is defined to be the natural additive map
$$\operatorname{tr}_{r} \colon K_{q}(\mathscr{C}) \longrightarrow
\operatorname{TR}_{q}^{r}(\mathscr{C})$$
given by the composition
$$[ S^q, K(\mathscr{C}) ] \overset{\tau_{*}}{\longrightarrow} 
[ S^q, T(\mathscr{C})^{\mathbb{T}} ] \overset{\sim}{\longrightarrow} 
[S^q \wedge(\mathbb{T}/\mathbb{T})_+,  \,T(\mathscr{C})]_{\mathbb{T}}
\overset{p_{r}^{*}}{\longrightarrow} 
[ S^q \wedge(\mathbb{T}/C_{r})_+,\,T(\mathscr{C})]_{\mathbb{T}}$$
of the map induced by the map $\tau$, the canonical isomorphism, and
the map induced by the canonical projection $p_r \colon \mathbb{T}/C_r \to
\mathbb{T}/\mathbb{T}$. The next result is essential for our purposes here.

\begin{lemma}\label{imageoftracefixed}If $r$ and $s$ are positive
integers with $s$ dividing $r$, then
$$F_{s} \circ\operatorname{tr}_{r} = \operatorname{tr}_{r/s} = R_{s}
\circ\operatorname{tr}_{r} \colon K_{q}(\mathscr{C}) \longrightarrow
\operatorname{TR}_{q}^{r/s}(\mathscr{C}).$$
\end{lemma}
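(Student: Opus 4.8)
The plan is to trace the cyclotomic trace map through its definition and check the two identities separately, both reducing to statements about the canonical maps $\mathbb{T}/C_r \to \mathbb{T}/\mathbb{T}$ and their interaction with the structure maps $f_s$, $v_s$, $i_s$, and $r_s$. Recall that $\operatorname{tr}_r$ is the composite of $\tau_*$, the canonical isomorphism $[S^q, T(\mathscr{C})^{\mathbb{T}}] \cong [S^q \wedge (\mathbb{T}/\mathbb{T})_+, T(\mathscr{C})]_{\mathbb{T}}$, and $p_r^*$ where $p_r \colon \mathbb{T}/C_r \to \mathbb{T}/\mathbb{T}$ is the projection. Since $\tau_*$ and the canonical isomorphism are independent of $r$, both identities will follow once we show that $f_s^* \circ p_r^* = p_{r/s}^*$ (for the Frobenius) and that $R_s$ applied to the image of $p_r^*$ equals $p_{r/s}^*$ (for the restriction), as maps landing in $\operatorname{TR}_q^{r/s}(\mathscr{C})$ applied to classes pulled back from $[S^q, T(\mathscr{C})^{\mathbb{T}}]$.

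First I would handle the Frobenius identity. The map $F_s \colon \operatorname{TR}_q^r(\mathscr{C}) \to \operatorname{TR}_q^{r/s}(\mathscr{C})$ is by definition $f_s^*$, induced by $f_s \colon \mathbb{T}/C_{r/s} \to \mathbb{T}/C_r$. Since $p_{r/s} = p_r \circ f_s \colon \mathbb{T}/C_{r/s} \to \mathbb{T}/\mathbb{T}$ (both are the unique equivariant projection to a point with trivial $\mathbb{T}$-action), functoriality of $[S^q \wedge (-)_+, T(\mathscr{C})]_{\mathbb{T}}$ gives $f_s^* \circ p_r^* = (p_r \circ f_s)^* = p_{r/s}^*$. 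Composing with the $r$-independent part yields $F_s \circ \operatorname{tr}_r = \operatorname{tr}_{r/s}$. This step is essentially formal.

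The restriction identity $R_s \circ \operatorname{tr}_r = \operatorname{tr}_{r/s}$ is the genuinely substantive part and is where I expect the main obstacle. Here $R_s$ is the composite of the geometric fixed point functor $\Phi^{C_s}$, the change-of-groups isomorphism $\rho_s^*$, and the isomorphism induced by $\rho_s^* i_s$ and $r_s$. The key input is that the cyclotomic structure map $r_s$ is compatible with the canonical map $\tau \colon K(\mathscr{C}) \to T(\mathscr{C})^{\mathbb{T}}$ — equivalently, that the classes in the image of $\operatorname{tr}_r$ come from the genuine $\mathbb{T}$-fixed points and so are carried by $R_s$ in the way dictated by the cyclotomic structure. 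Concretely, I would show that under $\Phi^{C_s}$ the source object $S^q \wedge (\mathbb{T}/C_r)_+$ maps via $i_s$ to $S^q \wedge (\mathbb{T}/C_r)^{C_s}_+$, and that $(\mathbb{T}/C_r)^{C_s} = \mathbb{T}/C_r$ since the $C_s$-action (coming from $C_r \supseteq C_s$ acting by translation) is free — so after applying $\rho_s^*$ and identifying via $\rho_s \colon \mathbb{T} \xrightarrow{\sim} \mathbb{T}/C_s$ one recovers $S^q \wedge (\mathbb{T}/C_{r/s})_+$ with the projection $p_{r/s}$ compatible with $p_r$ under these identifications. Combined with the defining compatibility of $r_s$ with $\tau$ (this is precisely the content of the construction in~\cite[p.~14]{hm4} and~\cite[Section~2]{hm}, and is what makes the cyclotomic trace land in the $R$-fixed/homotopy-limit pattern), we conclude $R_s \circ \operatorname{tr}_r = \operatorname{tr}_{r/s}$. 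The obstacle is bookkeeping: carefully matching the geometric-fixed-point identification $i_s$ and the change of groups $\rho_s^*$ against the projections $p_r$, and invoking the correct naturality statement for $r_s$ with respect to $\tau$; once these identifications are set up, the equality is forced.
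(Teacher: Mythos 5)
Your outline matches the paper's own proof: the Frobenius half is exactly the paper's argument (the identity $p_r\circ f_s=p_{r/s}$ plus functoriality), and for the restriction half the paper likewise unwinds the definition of $R_s$ and uses precisely the commutative square comparing $p_{r/s}$ with $\rho_s^*\Phi^{C_s}p_r$ via the maps $i_s$; your appeal to the compatibility of $r_s$ with $\tau$ built into the constructions of \cite{hm4} and \cite{hm} is at the same level of detail as the paper's ``follows from the definition of the restriction map,'' so there is no complaint on that score.

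There is, however, one concrete error in your justification of the key identification. You assert that $(\mathbb{T}/C_r)^{C_s}=\mathbb{T}/C_r$ ``since the $C_s$-action (coming from $C_r\supseteq C_s$ acting by translation) is free.'' This is backwards: because $s$ divides $r$ we have $C_s\subseteq C_r$, so translation by any element of $C_s$ is translation by an element of $C_r$, which acts \emph{trivially} on the quotient $\mathbb{T}/C_r$; the fixed set is all of $\mathbb{T}/C_r$ precisely because the action is trivial, not free. If the action were free (and $s>1$), the fixed set would be empty --- indeed, this is exactly the mechanism the paper exploits in the proof of Lemma~\ref{relations}~(iv), where $s\nmid r/t$ forces $(\mathbb{T}/C_{r/t})^{C_s}=\varnothing$ and hence $R_s\circ V_t=0$. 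So your conclusion, and the subsequent identification of $\rho_s^*\bigl((\mathbb{T}/C_r)^{C_s}\bigr)$ with $\mathbb{T}/C_{r/s}$, is correct, but the stated reason would prove the opposite; replace ``free'' by ``trivial'' and the argument goes through as in the paper.
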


\begin{proof}The left-hand equality follows immediately from the
definitions since
$$p_r \circ f_s = p_{r/s} \colon \mathbb{T}/C_{r/s} \longrightarrow
\mathbb{T}/\mathbb{T}.$$
Similarly, the right-hand equality follows from the definition of the
restriction map, which we recalled above, since the diagram
$$\xymatrix{
{ S^q \wedge (\mathbb{T}/C_{r/s})_+ }  \ar[rr]^-{p_{r/s}}
\ar[d]^-{i_s} &&
{ S^q \wedge (\mathbb{T}/\mathbb{T})_+ } \ar[d]^-{i_s} \cr
{ \rho_s^*\Phi^{C_s}(S^q \wedge (\mathbb{T}/C_r)_+) }
\ar[rr]^-{\rho_s^*\Phi^{C_s}p_r} &&
{ \rho_s^*\Phi^{C_s}(S^q \wedge (\mathbb{T}/\mathbb{T})_+) } \cr
}$$
commutes.
\end{proof}

We next recall the definition of the $(0,0)\,$th space of
$T(\mathscr{C})$, which we use below. The cyclic nerve of the exact
category $\mathscr{C}$ or, more generally, of any (small) category
$\mathscr{C}$ enriched in pointed sets is defined to be the cyclic
pointed set $N^{\operatorname{cy}}(\mathscr{C})[-]$ with $n$-simplices
$$N^{\operatorname{cy}}(\mathscr{C})[n] =\bigvee_{(P_{0},\dots,P_{n})}
\operatorname{Hom}_{\mathscr{C}}(P_{1},P_{0}) \wedge
\operatorname{Hom}_{\mathscr{C}}(P_{2},P_{1}) \wedge\dots\wedge
\operatorname{Hom}_{\mathscr{C}}(P_{0},P_{n}),$$
the wedge sum ranging over $(n+1)$-tuples of objects of $\mathscr{C}$, and
with cyclic structure maps
$$\begin{aligned}
d_i(f_0 \wedge \dots \wedge f_n) & = \begin{cases}
f_0 \wedge \dots \wedge f_i \circ f_{i+1} \wedge \dots \wedge f_n
& \text{if $0 \leqslant i < n$} \cr
f_n \circ f_0 \wedge \dots \wedge f_{n-1} & \text{if $i = n$ } \cr
\end{cases} \cr
s_i(f_0 \wedge \dots \wedge f_n) & = \begin{cases}
f_0 \wedge \dots \wedge f_i \wedge \operatorname{id}_{P_{i+1}} \wedge
f_{i+1} \wedge \dots \wedge f_n & \text{if $0 \leqslant i < n$} \cr
f_0 \wedge \dots \wedge f_n \wedge \operatorname{id}_{P_0} &
\text{if $i = n$} \cr
\end{cases} \cr
t_n(f_0 \wedge \dots \wedge f_n) & = f_n \wedge f_0 \wedge f_1 \wedge
\dots \wedge f_{n-1}. \cr
\end{aligned}$$
We also define $\mathscr{N}(\mathscr{C})[k]$ be the category of
functors from the category $[k]$ generated by the graph
$0\leftarrow1\leftarrow \dots\leftarrow k$ and define
$\mathscr{N}^{i}(\mathscr{C})[k]$ to be the \emph{full} subcategory
whose objects are all functors $P \colon [k] \to \mathscr{C}$ that
take morphisms in $[k]$ to isomorphisms in $\mathscr{C}$. In
particular,
$$\operatorname{ob}(\mathscr{N}^{i}(\mathscr{C})[k])
= N(i\mathscr{C})[k],$$
but morphisms in $\mathscr{N}^i(\mathscr{C})[k]$ need not be
isomorphisms. The cyclic nerve of $\mathscr{N}^i(\mathscr{C})[k]$ is a
cyclic pointed set, covariantly functorial in $[k]$. Hence, the
collection of these cyclic pointed sets form a simplicial cyclic
pointed set
$N^{\operatorname{cy}}(\mathscr{N}^{i}(\mathscr{C})[-])[-]$, and
$$T(\mathscr{C})_{(0,0)}
= |N^{\operatorname{cy}}(\mathscr{N}^{i}(\mathscr{C})[-])[-]|$$
is defined to be the geometric realization. Again, there is a canonical map
$$\sigma_r \colon \pi_q((T(\mathscr{C})_{(0,0)})^{C_{r}})
\longrightarrow \operatorname{TR}_q^r(\mathscr{C})$$
from the equivariant homotopy groups of the $(0,0)\,$th space to the
equivariant homotopy groups of the symmetric orthogonal
$\mathbb{T}$-spectrum $T(\mathscr{C})$, and this map, too, is
typically not a bijection. We note that, while the definition of the
domain of this map uses only the underlying category enriched in
pointed sets of the exact category $\mathscr{C}$, the definition of
the target uses the exact category structure. We also note that, on
$(0,0)\,$th spaces, the map $\operatorname{tr}_{1}$ is induced by the
map
$$\operatorname{tr}_1 \colon
\operatorname{ob}(\mathscr{N}^i(\mathscr{C})[-]) \longrightarrow
N^{\operatorname{cy}}(\mathscr{N}^{i}(\mathscr{C})[-])[0]$$
that to an object $P$ associates its identity morphism
$\operatorname{id}_P$.

We recall that McCarthy~\cite{mccarthy} has defined the Hochschild
homology of a (small) exact category $\mathscr{C}$. Proceeding as
in~\cite[Section~1.2]{hm4}, this construction gives rise to a
symmetric orthogonal $\mathbb{T}$-spectrum
$\operatorname{HH}(\mathscr{C})$, whose equivariant homotopy groups
we write
$$\operatorname{HH}_q^r(\mathscr{C})
=[S^{q}\wedge(\mathbb{T}/C_r)_+,
\operatorname{HH}(\mathscr{C})]_{\mathbb{T}}.$$
There is no cyclotomic structure on $\operatorname{HH}(\mathscr{C})$,
however, and the groups $\operatorname{HH}_q^r(\mathscr{C})$ with
$r>1$ appear to be of little use. Hence, we consider only the groups
$\smash{ \operatorname{HH}_q(\mathscr{C}) =
\operatorname{HH}_q^1(\mathscr{C}) }$ which are McCarthy's Hochschild
homology groups of $\mathscr{C}$. To define the $(0,0)\,$th space, we
recall that the additive cyclic nerve
$N_{\oplus}^{\operatorname{cy}}(\mathscr{C})[-]$ is the cyclic abelian
groups with $n$-simplices
$$N_{\oplus}^{\operatorname{cy}}(\mathscr{C})[n] 
= \bigoplus_{(P_{0},\dots,P_{n})}
\operatorname{Hom}_{\mathscr{C}}(P_{1},P_{0}) \otimes
\operatorname{Hom}_{\mathscr{C}}(P_{2},P_{1})\otimes\dots\otimes
\operatorname{Hom}_{\mathscr{C}}(P_{0},P_{n})$$
and with the cyclic structure maps defined as for the cyclic nerve,
replacing smash products by tensor products. Now the $(0,0)\,$th space
of $\operatorname{HH}(\mathscr{C})$ is defined to be the geometric
realization
$$\operatorname{HH}(\mathscr{C})_{(0,0)} =
|N_{\oplus}^{\operatorname{cy}}(\mathscr{N}^{i}(\mathscr{C})[-])[-]|$$
of the indicated simplicial cyclic abelian group. There is a canonical
map
$$\sigma_1 \colon
\pi_q(\operatorname{HH}(\mathscr{C})_{(0,0)}) \longrightarrow 
\operatorname{HH}_{q}(\mathscr{C})$$
from the (equivariant) homotopy groups of the $(0,0)\,$th space to the
(equivariant) homotopy groups of the symmetric orthogonal
$\mathbb{T}$-spectrum $\operatorname{HH}(\mathscr{C})$, but in
contrast to $K(\mathscr{C})$ and $T(\mathscr{C})$, it follows
from~\cite[Corollary~3.3.4]{mccarthy} that this map is an isomorphism,
if $\mathscr{C}$ is split-exact.

Finally, there is a canonical map of symmetric orthogonal
$\mathbb{T}$-spectra
$$\ell \colon
T(\mathscr{C})\longrightarrow\operatorname{HH}(\mathscr{C})$$
called the linearization map that on $(0,0)\,$th spaces is induced by the map
$$\ell \colon
N^{\operatorname{cy}}(\mathscr{N}^i(\mathscr{C})[-])[-]\longrightarrow
N_{\oplus}^{\operatorname{cy}}(\mathscr{N}^{i}(\mathscr{C})[-])[-]$$
of simplicial cyclic pointed sets that to $f_{0}\wedge\dots\wedge f_{n}$
associates $f_{0}\otimes\dots\otimes f_{n}$ and replaces wedge sums by direct
sums; compare~\cite[Definition~IV.1.3.5]{dundasgoodwilliemccarthy}. To
summarize the situation, we have the following commutative diagram, which we
need only in the case $q=0$.
\begin{equation}\label{diagram1}
\xymatrix{ 
{ \pi_q(K(\mathscr{C})_0) } \ar[r]^-{\operatorname{tr}_1}
\ar[d]^-{\sigma} &
{ \pi_q(T(\mathscr{C})_{(0,0)}) } \ar[r]^-{\ell} \ar[d]^-{\sigma_1} &
{ \pi_q(\operatorname{HH}(\mathscr{C})_{(0,0)}) } \ar[d]^-{\sigma_1}
\cr
{ K_q(\mathscr{C}) } \ar[r]^-{\operatorname{tr}_1} & 
{ \operatorname{TR}_q^1(\mathscr{C}) } \ar[r]^-{\ell} & 
{ \operatorname{HH}_q(\mathscr{C}) } \cr
}
\end{equation}
The lower right-hand horizontal map is an isomorphism, for
$q\leqslant1$; see loc.~cit. Moreover, if the exact category
$\mathscr{C}$ is split-exact, then the right-hand vertical map is an
isomorphism, for all $q\geqslant0$, and the left-hand vertical map is
a group-completion, for $q=0$.

We now specialize to the case where $\mathscr{C} = \mathscr{P}(A)$ is
the (split-)exact category of finitely generated projective right
modules over a unital associative ring $A$. We define
$$K_{q}(A) = K_{q}(\mathscr{P}(A)), \hskip8mm 
\operatorname{TR}_{q}^{r}(A) =
\operatorname{TR}_{q}^{r}(\mathscr{P}(A)), \hskip8mm 
\operatorname{HH}_{q}(A) = \operatorname{HH}_{q}(\mathscr{P}(A)).$$
The map left-hand vertical map $\sigma$ in~(\ref{diagram1}) induces an
isomorphism of the projective class group of $A$ onto the group
$K_0(A)$. Moreover, we have canonical isomorphisms
$$\xymatrix{
{ A/[A,A] } \ar[r]^-{j} &
{ \pi_0(\operatorname{HH}(\mathscr{P}(A))_{(0,0)}) }
\ar[r]^-{\sigma_1} &
{ \operatorname{HH}_0(\mathscr{P}(A)) } \cr
}$$
which justifies our writing $\operatorname{HH}_0(A)$ for the
right-hand group. Here, the map $j$ takes the class of $a \in A$ to
the class of the map of right $A$-modules $l_{a} \colon A \to A$
defined by $l_{a}(b) = ab$, and its inverse takes the class of 
$f \in\operatorname{Hom}_{A}(P_0,P_0)$ to the trace
$\operatorname{tr}(f) \in A/[A,A]$. We refer the reader
to~\cite[Proposition~2.4.3]{mccarthy} for further discussion.

We record the following result. Henceforth, we refer to the composite map in
the statement as the Hattori-Stallings rank.

\begin{lemma}\label{refineshattoristallings}The following composite
map agrees, under the above identification of its target group with
$A/[A,A]$, with the Hattori-Stallings rank.
$$\xymatrix{
{ K_0(A) } \ar[r]^-{\operatorname{tr}_1} &
{ \operatorname{TR}_0^1(A) } \ar[r]^-{\ell} &
{ \operatorname{HH}_0(A) } \cr
}$$
\end{lemma}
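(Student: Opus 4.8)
The plan is to evaluate the composite $\ell\circ\operatorname{tr}_1$ at the level of $(0,0)$-spaces, using the explicit simplicial descriptions of $\operatorname{tr}_1$ and $\ell$ recalled above together with the commutative diagram~(\ref{diagram1}), and then to transport the answer to $A/[A,A]$ along the isomorphism $A/[A,A]\xrightarrow{j}\pi_0(\operatorname{HH}(\mathscr{P}(A))_{(0,0)})\xrightarrow{\sigma_1}\operatorname{HH}_0(\mathscr{P}(A))$. Since $\sigma$ is a group-completion for $q=0$, the group $K_0(A)$ is generated by the classes $[P]$ of finitely generated projective right $A$-modules $P$, and each such class is the image under $\sigma$ of the path component of the vertex $P$ in $K(\mathscr{P}(A))_0=|N(i\mathscr{P}(A))[-]|$; as both $r$ and $\ell\circ\operatorname{tr}_1$ are additive, it suffices to compute on these generators.

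First I would chase diagram~(\ref{diagram1}). On $(0,0)$-spaces the cyclotomic trace sends the vertex $P$ to the vertex $\operatorname{id}_P$ of $N^{\operatorname{cy}}(\mathscr{N}^i(\mathscr{P}(A))[0])[0]$, so commutativity of the left-hand square gives $\operatorname{tr}_1([P])=\sigma_1([\operatorname{id}_P])$ in $\operatorname{TR}_0^1(A)$. The linearization map is induced by the map of simplicial cyclic pointed sets that replaces wedge sums by direct sums, hence fixes the vertex $\operatorname{id}_P$; so commutativity of the right-hand square gives $\ell(\operatorname{tr}_1([P]))=\sigma_1([\operatorname{id}_P])$ in $\operatorname{HH}_0(\mathscr{P}(A))$, where now $[\operatorname{id}_P]$ denotes the class of $\operatorname{id}_P$ in $\pi_0(\operatorname{HH}(\mathscr{P}(A))_{(0,0)})$. (This class is well-defined and depends only on the isomorphism class of $P$, since $\pi_0$ of the realizations involved is computed from $0$-simplices modulo the relation generated by $1$-simplices.) Applying $(\sigma_1\circ j)^{-1}$, which is legitimate as $\mathscr{P}(A)$ is split-exact, identifies this element with $j^{-1}([\operatorname{id}_P])\in A/[A,A]$, so the matter reduces to the equality $j^{-1}([\operatorname{id}_P])=r(P)$.

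By the description of $j^{-1}$ recalled above, the left-hand side is the trace $\operatorname{tr}(\operatorname{id}_P)$, which I would compute explicitly as follows. Write $P$ as a direct summand $eA^n\subseteq A^n$, with idempotent $e=\iota\pi$, where $\pi\colon A^n\to P$ and $\iota\colon P\to A^n$ satisfy $\pi\iota=\operatorname{id}_P$. The $1$-simplex $\iota\otimes\pi\in N^{\operatorname{cy}}_{\oplus}(\mathscr{N}^i(\mathscr{P}(A))[0])[1]$ has faces $d_0(\iota\otimes\pi)=\iota\pi=e$ and $d_1(\iota\otimes\pi)=\pi\iota=\operatorname{id}_P$, whence $[\operatorname{id}_P]=[e]$ in $\pi_0(\operatorname{HH}(\mathscr{P}(A))_{(0,0)})$. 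This reduces the computation to the free module $A^n$: writing $\epsilon_i\colon A\to A^n$ and $\delta_i\colon A^n\to A$ for the coordinate inclusions and projections, additivity of the class map together with the relations $[gh]=[hg]$ furnished by the $1$-simplices $g\otimes h$ gives $[e]=\sum_i[e\epsilon_i\delta_i]=\sum_i[\delta_i e\epsilon_i]=\sum_i[l_{e_{ii}}]=j(\sum_i e_{ii})$, so $j^{-1}([\operatorname{id}_P])=\sum_i e_{ii}\bmod[A,A]$. But $\sum_i e_{ii}\bmod[A,A]$ is the classical formula for the Hattori-Stallings rank $r(eA^n)=r(P)$. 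Hence $\ell\circ\operatorname{tr}_1$ agrees with $r$ on every generator, and therefore everywhere.

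The step requiring genuine care, and which I expect to be the main obstacle, is this last point: that $j^{-1}$ carries the class of an endomorphism of an \emph{arbitrary} finitely generated projective to its categorical trace, and in particular the reduction from the general projective to the free module via the $1$-simplex $\iota\otimes\pi$. Everything else is a formal chase through diagram~(\ref{diagram1}) and the explicit descriptions of $\operatorname{tr}_1$, $\ell$, and $j$ on $(0,0)$-spaces. Alternatively, one may invoke~\cite[Proposition~2.4.3]{mccarthy} directly for the identity $j^{-1}([f])=\operatorname{tr}(f)$ and then simply observe that $r(P)=\operatorname{tr}(\operatorname{id}_P)$ by the very definition of the Hattori-Stallings rank given in the introduction.
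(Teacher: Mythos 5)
Your proposal is correct and follows essentially the same route as the paper: the paper's (very terse) proof is precisely the observation that the composite of the top horizontal maps in~(\ref{diagram1}) carries the class of $P$ to the class of $\operatorname{id}_P$, after which the identification of $\operatorname{HH}_0(A)$ with $A/[A,A]$ — whose inverse sends the class of an endomorphism to its trace — yields the Hattori-Stallings rank by definition. The explicit verification you give that $j^{-1}([\operatorname{id}_P])=\operatorname{tr}(\operatorname{id}_P)$, via the $1$-simplex $\iota\otimes\pi$ and the reduction to a free module, is exactly the content the paper delegates to \cite[Proposition~2.4.3]{mccarthy}, as you note in your closing alternative.
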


\begin{proof}Comparing definitions, we see that it suffices to show
that the composition of the top horizontal maps in~(\ref{diagram1})
takes the class of $P$ to the class of $\operatorname{id}_P$. It does.
\end{proof}

It follows from~Lemmas~\ref{relations} and~\ref{imageoftracefixed}
that the maps $\operatorname{tr}_{r}$ define a map
$$\operatorname{tr}\colon K_0(A) \longrightarrow
\operatorname{TR}_0(A)=\lim_{r}\operatorname{TR}_0^{r}(A)$$
to the inverse limit with respect to the restriction maps, the limit
indexed by set of the positive integers ordered under
division. Moreover, it follows from Lemma~\ref{relations} that the
collection of Frobenius maps 
$\smash{ F_s \colon \operatorname{TR}_0^r(A) \to \operatorname{TR}_0^{r/s}(A) }$ induces a
Frobenius map
$$F_s \colon \operatorname{TR}_0(A) \longrightarrow
\operatorname{TR}_0(A)$$
and that these maps, in turn, define an action of the multiplicative
monoid of positive integers $\mathbb{N}$ on
$\operatorname{TR}_0(A)$. We write
$\operatorname{TR}_0(A)^{\mathbb{N}}$ for the subgroup of elements
fixed by the $\mathbb{N}$-action. Finally,
Lemma~\ref{imageoftracefixed} shows that the image of the map
$\operatorname{tr}$ is contained in
$\operatorname{TR}_0(A)^{\mathbb{N}} \subseteq
\operatorname{TR}_0(A)$, and Lemma~\ref{refineshattoristallings} shows
that the Hattori-Stallings rank is equal to the composite map
$$\xymatrix{
{ K_0(A) } \ar[r]^-{\operatorname{tr}} &
{ \operatorname{TR}_0(A)^{\mathbb{N}} } \ar[r]^-{i} &
{ \operatorname{TR}_0(A) } \ar[r]^-{\operatorname{pr}_1} &
{ \operatorname{TR}_0^1(A) } \ar[r]^-{\ell} &
{ \operatorname{HH}_0(A), } \cr
}$$
where $i$ is the canonical inclusion. This factorization of the
Hattori-Stallings rank map leads to restrictions on its image, as we
next see.

The cyclotomic structure map
$r_s \colon \rho_s^*\Phi^{C_s}T(\mathscr{C}) \to T(\mathscr{C})$, on
the level of $(0,0)\,$th spaces, is a $\mathbb{T}$-equivariant
homeomorphism, the inverse of which is given by the composition of the
homeomorphisms $\Delta_r$ and $D_r$ defined
in~\cite[Sections~1--2]{bokstedthsiangmadsen}. In particular, the
middle left-hand horizontal map in the following diagram is a
bijection. We use this fact in the case $A=\mathbb{Z}[G]$ and
$\mathscr{C}=\mathscr{P}(A)$, where we consider the commutative
diagram
\begin{equation}\label{diagram2}
\xymatrix{ 
{} & 
{ C(G) } \ar[r]^-{i} \ar[d]^-{j} & 
{ A/[A,A] } \ar[d]^-{j}_-{\sim} \cr
{ \pi_0((T(\mathscr{C})_{(0,0)})^{C_s}) } \ar[r]^-{r_s}_-{\sim}
\ar[d]^-{\sigma_s} & 
{ \pi_0(T(\mathscr{C})_{(0,0)}) } \ar[r]^-{\ell} \ar[d]^-{\sigma_1} &
{ \pi_0(\operatorname{HH}(\mathscr{C})_{(0,0)}) }
\ar[d]^-{\sigma_1}_-{\sim} \cr { \operatorname{TR}_0^s(\mathscr{C}) } \ar[r]^-{R_s}
&
{ \operatorname{TR}_0^1(\mathscr{C}) } \ar[r]^-{\ell}_-{\sim} & 
{ \operatorname{HH}_0(\mathscr{C}) } \cr
}
\end{equation}
in which the top middle vertical map $j$ takes the class of $g\in G$
to the class of the map of right $A$-modules 
$l_{g}\colon A \to A$ defined by $l_{g}(b)=gb$, and the top vertical
map $i$ is induced by the canonical inclusion of $G$ in $A$. We define
the natural set map
$$[-]_s \colon C(G) \longrightarrow \operatorname{TR}_0^s(A)$$
to be the composition of the map $j$, the inverse of the map $r_s$,
and the map $\sigma_s$.

\begin{lemma}\label{bracket}Let $G$ be a group. For every $g \in G$,
and every divisor $t$ of $r$,
$$R_t([g]_r) = [g]_{r/t}, \hskip10mm F_t([g]_r) = [g^t]_{r/t}.$$
\end{lemma}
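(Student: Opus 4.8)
The strategy is to trace both formulas through the definition $[g]_s=\sigma_s(r_s^{-1}(j([g])))$ of the map $[-]_s$ on the level of the $(0,0)\,$th space $T(\mathscr{C})_{(0,0)}=|N^{\operatorname{cy}}(\mathscr{N}^i(\mathscr{C})[-])[-]|$, where $\mathscr{C}=\mathscr{P}(\mathbb{Z}[G])$ and $j([g])\in\pi_0(T(\mathscr{C})_{(0,0)})$ is the class of the $0$-simplex $l_g\in\operatorname{Hom}_{\mathbb{Z}[G]}(\mathbb{Z}[G],\mathbb{Z}[G])$. First I would record, for a divisor $t$ of $r$, two compatibilities of the maps $\sigma_s$ with $R_t$ and $F_t$, each generalizing a commutative square of diagram~(\ref{diagram2}): (a) $R_t\circ\sigma_r=\sigma_{r/t}\circ(r_t)_{\ast}$, where $(r_t)_{\ast}\colon\pi_0((T(\mathscr{C})_{(0,0)})^{C_r})\to\pi_0((T(\mathscr{C})_{(0,0)})^{C_{r/t}})$ is induced by the homeomorphism $r_t$ on $(0,0)\,$th spaces, which carries the $C_r$-fixed points isomorphically onto the $C_{r/t}$-fixed points; and (b) $F_t\circ\sigma_r=\sigma_{r/t}\circ\iota_{\ast}$, where $\iota_{\ast}$ is induced by the inclusion $(T(\mathscr{C})_{(0,0)})^{C_r}\hookrightarrow(T(\mathscr{C})_{(0,0)})^{C_{r/t}}$ coming from $C_{r/t}\subseteq C_r$. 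Statement~(b) is immediate from $F_t=f_t^{\ast}$ and the construction of $\sigma_s$, and~(a) follows by unravelling the definition of $R_t$ through $\Phi^{C_t}$, $\rho_t^{\ast}$, and $r_t$ on $(0,0)\,$th spaces, the lower left square of~(\ref{diagram2}) being the case $t=r$.

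Next I would recall, following~\cite[Sections~1--2]{bokstedthsiangmadsen} and~\cite[Section~2]{hm}, the explicit form of the maps $r_m$ on $(0,0)\,$th spaces: for a category $\mathscr{B}$ enriched in pointed sets, edgewise subdivision identifies $(|N^{\operatorname{cy}}(\mathscr{B})[-]|)^{C_m}$ with $|(\operatorname{sd}_mN^{\operatorname{cy}}(\mathscr{B})[-])^{C_m}|$, and since the $C_m$-fixed points of an $m$-fold smash power under cyclic permutation are the diagonal, $(\operatorname{sd}_mN^{\operatorname{cy}}(\mathscr{B})[-])^{C_m}[n]=(N^{\operatorname{cy}}(\mathscr{B})[m(n+1)-1])^{C_m}$ is identified with $N^{\operatorname{cy}}(\mathscr{B})[n]$ by recording, from an $(n+1)$-periodic string, its period. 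Hence $r_m^{-1}(j([g]))$ is the class of the $C_m$-fixed $0$-simplex $\underbrace{l_g\wedge\cdots\wedge l_g}_{m}\in(N^{\operatorname{cy}}(\mathscr{N}^i(\mathscr{C})[0])[m-1])^{C_m}$; and, by the coherence of the subdivision homeomorphisms under iteration, the same point of $T(\mathscr{C})_{(0,0)}$, presented through the $(r/t)$-fold subdivision, is the $C_{r/t}$-fixed $(t-1)$-simplex $\underbrace{l_g\wedge\cdots\wedge l_g}_{r}$ of $(\operatorname{sd}_{r/t}N^{\operatorname{cy}}(\mathscr{N}^i(\mathscr{C})[0])[-])^{C_{r/t}}$, whose period of length $t$ is $\underbrace{l_g\wedge\cdots\wedge l_g}_{t}\in N^{\operatorname{cy}}(\mathscr{N}^i(\mathscr{C})[0])[t-1]$.

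Both formulas now follow. For $R_t$: the cocycle identity for the cyclotomic structure maps (the $(0,0)\,$th-space shadow of the identity $R_sR_t=R_{st}$ in Lemma~\ref{relations}) gives $r_{r/t}\circ(r_t)_{\ast}=r_r$ on $C_r$-fixed points, so $(r_t)_{\ast}(r_r^{-1}(j([g])))=r_{r/t}^{-1}(j([g]))$ since $r_{r/t}$ is bijective on $\pi_0$, whence $R_t([g]_r)=\sigma_{r/t}(r_{r/t}^{-1}(j([g])))=[g]_{r/t}$ by~(a). For $F_t$: by~(b), $F_t([g]_r)=\sigma_{r/t}(\iota_{\ast}(r_r^{-1}(j([g]))))$, so it suffices to compute $r_{r/t}(\iota_{\ast}(r_r^{-1}(j([g]))))$; by the previous paragraph $r_{r/t}$ carries this class to the class of the $(t-1)$-simplex $\underbrace{l_g\wedge\cdots\wedge l_g}_{t}$ of $N^{\operatorname{cy}}(\mathscr{N}^i(\mathscr{C})[0])[-]$, and applying the face operator $d_0$ a total of $t-1$ times, using $d_0(f_0\wedge\cdots\wedge f_n)=f_0\circ f_1\wedge f_2\wedge\cdots\wedge f_n$ together with $l_g\circ\cdots\circ l_g=l_{g^t}$, shows that this simplex has the same class in $\pi_0$ as the $0$-simplex $l_{g^t}$. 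Hence $r_{r/t}(\iota_{\ast}(r_r^{-1}(j([g]))))=j([g^t])$, so $\iota_{\ast}(r_r^{-1}(j([g])))=r_{r/t}^{-1}(j([g^t]))$ and $F_t([g]_r)=\sigma_{r/t}(r_{r/t}^{-1}(j([g^t])))=[g^t]_{r/t}$.

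The step requiring care is the bookkeeping in the last two paragraphs: presenting one and the same $C_r$-fixed point of $T(\mathscr{C})_{(0,0)}$ simultaneously in the $m$-fold subdivided models for $m=r$ and for $m=r/t$, and verifying that the identifications $(\operatorname{sd}_mN^{\operatorname{cy}})^{C_m}\cong N^{\operatorname{cy}}$ defining the $(0,0)\,$th-space maps $r_m$ are compatible with the face operators — this is precisely where the passage $g\mapsto g^t$ in $F_t([g]_r)=[g^t]_{r/t}$ originates. The compatibilities~(a) and~(b) and the cocycle identity for the $r_s$ are, by contrast, formal consequences of the constructions recalled in this section.
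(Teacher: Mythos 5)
Your proof is correct and follows essentially the same route as the paper: the paper disposes of the first identity as immediate from the definitions and cites \cite[Lemma~3.3, p.~54]{hm} for the second, and your compatibilities (a) and (b) together with the edgewise-subdivision description of the maps $r_m$ on $(0,0)\,$th spaces are exactly that argument, with the cited lemma replaced by your direct $\pi_0$-level verification that the inclusion of $C_r$-fixed points followed by $r_{r/t}$ sends the class of $l_g$ to the class of $l_{g^t}$.
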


\begin{proof}The first statement is immediate from the definitions,
and the second statement follows from the definitions
and~\cite[Lemma~3.3, p.~54]{hm}.
\end{proof}

It follows from~(\ref{diagram2}) that the map $[-]_1$ induces an
isomorphism from the free abelian group generated by $C(G)$ onto the
group $\operatorname{TR}_0^1(\mathbb{Z}[G])$. We have the following
generalization.

\begin{prop}\label{TR_0^r}Let $G$ be a group, and let $r$ be a
positive integer. The abelian group
$\operatorname{TR}_0^r(\mathbb{Z}[G])$ is free with a basis consisting
of the elements $V_t([g]_{r/t})$, where $t$ and $[g]$ range over the
divisors of $r$ and the elements of $C(G)$, respectively.
\end{prop}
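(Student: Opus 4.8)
The plan is to argue by induction on the number $\Omega(r)$ of prime factors of $r$, counted with multiplicity. When $r=1$ the assertion is exactly the statement, recorded just above the proposition, that $[-]_1$ identifies the free abelian group on $C(G)$ with $\operatorname{TR}_0^1(\mathbb{Z}[G])$; note that this matches the claimed basis since $V_1=\operatorname{id}$ by Lemma~\ref{relations}(i) and $[g]_{1/1}=[g]_1$.

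For the inductive step, fix a prime $p\mid r$ and write $r=p^{u}d$ with $p\nmid d$ and $u\geqslant 1$. Both $d$ and $r/p$ have strictly fewer prime factors than $r$, so by the inductive hypothesis $\operatorname{TR}_0^{d}(\mathbb{Z}[G])$ and $\operatorname{TR}_0^{r/p}(\mathbb{Z}[G])$ are free abelian on the stated bases; in particular they are torsion-free. Proposition~\ref{exactsequence} supplies the exact sequence
$$\operatorname{TR}_0^{d}(\mathbb{Z}[G]) \xrightarrow{\ V_{p^{u}}\ } \operatorname{TR}_0^{r}(\mathbb{Z}[G]) \xrightarrow{\ R_p\ } \operatorname{TR}_0^{r/p}(\mathbb{Z}[G]) \longrightarrow 0,$$
and the crucial extra point is that $V_{p^{u}}$ is injective: Lemma~\ref{relations}(iii) with $s=t=p^{u}$ (so $d=p^{u}$) gives $F_{p^{u}}\circ V_{p^{u}}=p^{u}\cdot\operatorname{id}$ on $\operatorname{TR}_0^{d}(\mathbb{Z}[G])$, hence any element of $\ker V_{p^{u}}$ is $p^{u}$-torsion, hence zero since $\operatorname{TR}_0^{d}(\mathbb{Z}[G])$ is torsion-free by induction. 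Thus the displayed sequence is a short exact sequence.

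Next I would compute how $V_{p^{u}}$ and $R_p$ act on the proposed bases, using Lemmas~\ref{relations} and~\ref{bracket}. Writing a divisor $t\mid r$ as $t=p^{a}e$ with $0\leqslant a\leqslant u$ and $e\mid d$, one checks: if $a<u$ then $pt\mid r$, so $R_p\circ V_t=V_t\circ R_p$ by Lemma~\ref{relations}(ii), whence $R_p(V_t([g]_{r/t}))=V_t([g]_{(r/p)/t})$ by Lemma~\ref{bracket}; as $(t,[g])$ runs over divisors of $r/p$ and elements of $C(G)$ this runs bijectively over the proposed basis of $\operatorname{TR}_0^{r/p}(\mathbb{Z}[G])$. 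If $a=u$ then $t,p\mid r$ but $pt\nmid r$, so $R_p\circ V_t=0$ by Lemma~\ref{relations}(iv). Dually, $V_{p^{u}}\circ V_e=V_{p^{u}e}$ by Lemma~\ref{relations}(ii), so $V_{p^{u}}$ carries the basis $\{V_e([g]_{d/e}):e\mid d\}$ of $\operatorname{TR}_0^{d}(\mathbb{Z}[G])$ bijectively onto the set $B_2$ of proposed basis elements $V_t([g]_{r/t})$ with $p^{u}\mid t$. Letting $B_1$ denote the remaining proposed basis elements (those with $p^{u}\nmid t$) and $F=F_1\oplus F_2$ the free abelian group on the formal symbols $B_1\sqcup B_2$, the tautological map $\phi\colon F\to\operatorname{TR}_0^{r}(\mathbb{Z}[G])$ then fits into a commutative ladder whose top row is the split exact sequence $0\to F_2\to F\to F_1\to 0$, whose bottom row is the short exact sequence above, and whose outer vertical maps are the isomorphisms onto $\operatorname{TR}_0^{d}(\mathbb{Z}[G])$ and $\operatorname{TR}_0^{r/p}(\mathbb{Z}[G])$ furnished by the inductive hypothesis. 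The five lemma gives that $\phi$ is an isomorphism, which completes the induction.

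I expect the main obstacle to be bookkeeping rather than conceptual: one must split the proposed basis of $\operatorname{TR}_0^{r}(\mathbb{Z}[G])$ according to the $p$-adic valuation of $t$ and match the two pieces precisely with the bases of $\operatorname{TR}_0^{d}(\mathbb{Z}[G])$ and $\operatorname{TR}_0^{r/p}(\mathbb{Z}[G])$, being careful about the hypotheses of Lemma~\ref{relations}(ii) versus~(iv). The one genuinely non-formal input is the injectivity of $V_{p^{u}}$, which is not part of Proposition~\ref{exactsequence} and comes exactly from the relation $F_{p^{u}}V_{p^{u}}=p^{u}$ combined with the torsion-freeness carried along in the inductive hypothesis.
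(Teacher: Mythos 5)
Your proof is correct and follows essentially the same route as the paper: induction over divisors, the exact sequence of Proposition~\ref{exactsequence}, and injectivity of $V_{p^u}$ deduced from $F_{p^u}\circ V_{p^u}=p^u\cdot\operatorname{id}$ together with the torsion-freeness supplied by the inductive hypothesis. The only difference is that you spell out the basis bookkeeping (matching $V_t([g]_{r/t})$ for $p^u\mid t$ with the image of $V_{p^u}$ and the rest with lifts along $R_p$), which the paper leaves implicit after the phrase ``it will suffice to show that $V_{p^u}$ is injective.''
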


\begin{proof}The proof is by induction on $r$; the case $r=1$ was
established above. So we fix $r$ and assume that the statement holds
for all proper divisors of $r$. We choose a prime $p$ that divides $r$
and consider the exact sequence
$$\xymatrix{
{ \operatorname{TR}_0^d(\mathbb{Z}[G]) } \ar[r]^-{V_{p^u}} &
{ \operatorname{TR}_0^r(\mathbb{Z}[G]) } \ar[r]^-{R_p} &
{ \operatorname{TR}_0^{r/p}(\mathbb{Z}[G]) } \ar[r] &
{ 0 } \cr
}$$
from Proposition~\ref{exactsequence} with $r = p^ud$ and $d$ not
divisible by $p$. It will suffice to show that the left-hand map
$V_{p^u}$ is injective. But it follows from Lemma~\ref{relations} that
the composite map
$$\xymatrix{
{ \operatorname{TR}_0^d(\mathbb{Z}[G]) } \ar[r]^-{V_{p^u}} &
{ \operatorname{TR}_0^r(\mathbb{Z}[G]) } \ar[r]^-{F_{p^u}} &
{ \operatorname{TR}_0^d(\mathbb{Z}[G]) } \cr
}$$
is equal to multiplication by $p^u$, and since the group
$\operatorname{TR}_0^d(\mathbb{Z}[G])$, inductively, is a free abelian
group, we conclude that $V_{p^u}$ is injective as desired.
\end{proof}

We use Proposition~\ref{TR_0^r} to evalute the group
$\operatorname{TR}_0(\mathbb{Z}[G])$ which, we recall, is defined to
be the inverse limit of the groups
$\operatorname{TR}_0^r(\mathbb{Z}[G])$ with respect to the restriction
maps. To this end, we abuse notation and write $[g]$ for the element
$([g]_r)$ of $\operatorname{TR}_0(\mathbb{Z}[G])$. We also remark that
the maps $\smash{ V_t \colon
\operatorname{TR}_0^{r/t}(\mathbb{Z}[G]) \to
\operatorname{TR}_0^r(\mathbb{Z}[G]) }$ give rise to a map $V_t \colon 
\operatorname{TR}_0(\mathbb{Z}[G]) \to
\operatorname{TR}_0(\mathbb{Z}[G])$. 

\begin{cor}\label{TR_0}Every element $a
\in\operatorname{TR}_0(\mathbb{Z}[G])$ admits a unique expression as
a series
$$a = \sum a_{t,[g]}V_{t}([g])$$
where $t$ and $[g]$ range over the positive integers and the conjugacy classes
of elements in $G$, respectively, and where $a_{t,[g]}$ are integers with the
property that for every positive integer $t$, the coefficient $a_{t,[g]}$ is
nonzero for only finitely many $[g]\in C(G)$.
\end{cor}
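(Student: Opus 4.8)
The plan is to compute the inverse limit $\operatorname{TR}_0(\mathbb{Z}[G]) = \lim_r \operatorname{TR}_0^r(\mathbb{Z}[G])$ explicitly by tracking the bases supplied by Proposition~\ref{TR_0^r} through the restriction maps. The first step is the computation: for divisors $s,t$ of $r$ and $[g] \in C(G)$,
$$R_s\bigl(V_t([g]_{r/t})\bigr) = \begin{cases} V_t([g]_{r/(st)}) & \text{if } st \mid r, \\ 0 & \text{otherwise.} \end{cases}$$
When $st \mid r$, this combines the relation $R_s \circ V_t = V_t \circ R_s$ of Lemma~\ref{relations}(ii) with the identity $R_s([g]_{r/t}) = [g]_{r/(st)}$ of Lemma~\ref{bracket}. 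For the vanishing one reduces, by factoring $R_s$ into prime steps and applying the nonzero case at each intermediate stage, to the case where $s = p$ is a prime with $v_p(t) = v_p(r)$; writing $r = p^u d$ with $p \nmid d$, Proposition~\ref{exactsequence} identifies $\ker(R_p)$ with $\operatorname{im}(V_{p^u})$, and by Proposition~\ref{TR_0^r} applied to $\operatorname{TR}_0^d(\mathbb{Z}[G])$ together with the relation $V_{p^u} \circ V_{t_0} = V_{p^u t_0}$, this image is spanned by exactly the basis elements $V_{t'}([g']_{r/t'})$ with $v_p(t') = u$.

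Granting this, the rest is bookkeeping. An element $a \in \operatorname{TR}_0(\mathbb{Z}[G])$ is a compatible family $(a^{(r)})_r$; writing $a^{(r)} = \sum_{t \mid r,\,[g]} c^{(r)}_{t,[g]} V_t([g]_{r/t})$ in the basis of Proposition~\ref{TR_0^r}, the condition $R_s(a^{(r)}) = a^{(r/s)}$ becomes, via the displayed formula, $c^{(r)}_{t,[g]} = c^{(r/s)}_{t,[g]}$ for all $t \mid r/s$. Hence for each pair $(t,[g])$ the coefficient $c^{(r)}_{t,[g]}$ is independent of the choice of $r$ divisible by $t$; calling it $a_{t,[g]}$, the requirement that $a^{(t)}$ be a finite linear combination says precisely that for each $t$ only finitely many $[g]$ give $a_{t,[g]} \neq 0$. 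Conversely any such family of integers yields a compatible family by $a^{(r)} := \sum_{t \mid r,\,[g]} a_{t,[g]} V_t([g]_{r/t})$, a finite sum since only finitely many $t$ divide $r$, so $a \mapsto (a_{t,[g]})$ is a bijection. Since $V_t([g]) \in \operatorname{TR}_0(\mathbb{Z}[G])$ has $r$-component $V_t([g]_{r/t})$ when $t \mid r$ and $0$ otherwise, by the displayed formula applied after restricting from a common multiple of $r$ and $t$, the series $\sum a_{t,[g]} V_t([g])$ has $r$-component the finite partial sum $\sum_{t \mid r,\,[g]} a_{t,[g]} V_t([g]_{r/t}) = a^{(r)}$ and so represents $a$; uniqueness of the expression is the freeness asserted in Proposition~\ref{TR_0^r}.

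The main obstacle is the opening computation, and in particular its vanishing half: one must be careful with the divisibility case distinction, and the neatest route to the vanishing seems to run through the explicit description of $\ker(R_p)$ coming from Propositions~\ref{exactsequence} and~\ref{TR_0^r} rather than a bare manipulation of the relations in Lemma~\ref{relations}. After that the only point needing care is the interpretation of the infinite series: it is understood as an element of the inverse limit, that is, the family whose $r$-component is the (necessarily finite) sum of those terms $a_{t,[g]} V_t([g])$ with $t \mid r$.
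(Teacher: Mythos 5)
Your proof is correct and follows essentially the same route as the paper: write each $\operatorname{TR}_0^r(\mathbb{Z}[G])$ in the basis of Proposition~\ref{TR_0^r}, compute the effect of $R_s$ on basis elements via Lemmas~\ref{relations} and~\ref{bracket}, and read off the inverse limit by matching coefficients. The only deviation is your treatment of the vanishing case: $R_s(V_t([g]_{r/t}))=0$ when $st\nmid r$ is exactly Lemma~\ref{relations}(iv), so the prime-by-prime reduction and the appeal to the exact sequence of Proposition~\ref{exactsequence} are a valid but unnecessary detour.
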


\begin{proof}By Proposition~\ref{TR_0^r}, each group
$\operatorname{TR}_0^r(\mathbb{Z}[G])$ in the inverse limit is a free abelian group
with a basis consisting of the elements $V_{t}([g]_{r/t})$, where $t$
and $[g]$ range over the divisors of $r$ and the conjugacy classes of
elements in $G$, respectively. Moreover, if $s$ divides $r$, then
Lemmas~\ref{relations} and~\ref{bracket} show that the restriction map
$$R_s \colon \operatorname{TR}_0^r(\mathbb{Z}[G])
\longrightarrow
\operatorname{TR}_0^{r/s}(\mathbb{Z}[G])$$
is equal to the $\mathbb{Z}$-linear map given by
$$R_s(V_{t}([g]_{r/t})) = \begin{cases}
V_t([g]_{r/st}) & \text{if $st$ divides $r$,} \cr
0 & \text{otherwise.} \cr
\end{cases}$$
We see that the statement follows, since $V_t([g]) \in \operatorname{TR}_0(\mathbb{Z}[G])$
is the unique element that projects to $V_t([g]_{r/t}) \in
\operatorname{TR}_0^r(\mathbb{Z}[G])$, if $t$ divides $r$, and to $0 \in
\operatorname{TR}_0^r(\mathbb{Z}[G])$,
otherwise. 
\end{proof}

\section{Proofs of Theorems~\ref{linnell} and~\ref{residuallycentral}}

\label{section2}

We first use the factorization of the Hattori-Stallings rank map through the
cyclotomic trace map to prove Linnell's
theorem~\cite[Lemma~4.1]{linnell} that we stated as
Theorem~\ref{linnell} in the introduction.

\begin{proof}[Proof of Theorem \ref{linnell}]Let $P$ be a finitely
generated projective $\mathbb{Z}[G]$-module. We may write the
Hattori-Stallings rank of $P$ uniquely as
$$r(P) = a_0[1] + a_1[g_1] + \dots + a_n[g_n],$$
where $[1], [g_1], \dots, [g_n]$ are distinct elements of $C(G)$,
where $a_0$ is an arbitrary integer, and where $a_1, \dots, a_n$ are
nonzero integers. Let $m$ be the minimal exponent of the symmetric
group on $n$ letters (the least common multiple of
$\{1,2,3, \dots ,n\}$). We claim that
$$[g_i] = [g_i^{s^m}]$$
for all positive integers $s$ and for $i = 1, 2, \dots, n$. This will
prove the theorem.
To prove the claim, we consider the image $a = \operatorname{tr}(P) \in
\operatorname{TR}_0(\mathbb{Z}[G])$ by the cyclotomic trace
map. We write the element $a$ uniquely as a series
$$a = \sum a_{t,[g]}V_t([g])$$
as in the statement of Corollary~\ref{TR_0}. By
Lemma~\ref{refineshattoristallings}, we have
$\ell(\operatorname{pr}_1(a)) = r(P)$, which shows that the
coefficient $a_{1,[g]}$ is equal to $a_i$, if $[g] = [g_i]$, and zero,
otherwise. Since $a$ is in the image of the cyclotomic trace map, we
have from Lemma~\ref{imageoftracefixed} that $a = F_s(a)$, for all
positive integers $s$. Let $s = p$ be a prime number. In this case, we
have by Lemma~\ref{relations} that
$$F_p(a) = \sum a_{t,[g]} V_t([g^{p}]) + \sum pa_{t,[g]}V_{t/p}([g]),$$
where, in the left-hand sum, $t$ ranges over all positive integers
prime to $p$, where, in the right-hand sum, $t$ ranges over all positive
integers divisible by $p$, and where, in both sums, $[g]$ ranges over
the conjugacy classes of elements in $G$. We note that the generators
$V_{t/p}([g])$ that appear in the right-hand sum are pairwise
distinct, while the generators $V_{t}([g^{p}])$ that appear in the
left-hand sum need not be distinct. In particular, every generator of
the form $V_{t/p}([g])$ with $t$ divisible by $p^{2}$ appears only in the
right-hand sum and appears with the coefficient
$pa_{t,[g]}$. Therefore, the equation $a=F_p(a)$ implies that, for $t$
divisible by $p^{2}$,
$$a_{t/p,[g]} = pa_{t,[g]}.$$
Equivalently, on writing $t=pu$, we have that, for $u$ divisible by
$p$,
$$a_{u,[g]} = pa_{pu,[g]}.$$
Iterating this equation, we find that, for $u$ divisible by $p$, the
coefficient $a_{u,[g]}$ can be divided by $p$ arbitrarily often, and
therefore, is equal to zero. Since this is true for all prime numbers
$p$, we conclude that the coefficient $a_{t,[g]}$ is equal to zero,
unless $t=1$. Hence,
$$a = a_0[1] + a_1[g_1] + \cdots + a_n[g_n],$$
and by Lemma~\ref{bracket}, the equation $a = F_s(a)$ becomes
$$a_0[1] + a_1[g_1] + \cdots + a_n[g_n]
= a_0[1] + a_1[g_1^s] + \cdots + a_n[g_n^s].$$
It follows, by the uniqueness of this expression, that for every
positive integer $s$, the map
$$\varphi_s \colon C(G) \longrightarrow C(G)$$
that takes $[g]$ to $[g^s]$ restricts to a bijection
$\varphi_s|_S$ of the subset
$S = \{ [g_1], \dots, [g_n] \}$ onto itself. But
$(\varphi_s|_S)^m = \operatorname{id}_S$, by the definition of $m$, so
the claim follows.
\end{proof}

\begin{cor}\label{gQdescribed}Let $g$ be a nontrivial element of the
group $G$ for which there exists a finitely generated projective
$\mathbb{Z}[G]$-module $P$ with $r(P)(g) \neq0$. In this situation,
the element $g$ lies in a subgroup $C$ of $G$ that:
\begin{enumerate}
\item[\textrm{(i)}] is isomorphic to $\mathbb{Q}$,
\item[\textrm{(ii)}] is generated by conjugates of $g$,
\item[\textrm{(iii)}] is contained in a finitely generated subgroup $H$ of $G$,
\item[\textrm{(iv)}] has its elements lying in finitely many $H$-conjugacy
classes, and
\item[\textrm{(v)}] has normal closure the subgroup $[G,C]=[G,g]$ of $G$
generated by all commutators of the form $[h,g]$ with $h \in G$.
\end{enumerate}
\end{cor}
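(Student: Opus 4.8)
The plan is to extract from Theorem~\ref{linnell} that, for a single $m=m(g)$, the elements $g$ and $g^{s^{m}}$ are conjugate for every $s\geqslant 1$, to assemble the resulting roots of $g$ into a copy of $\mathbb{Q}$, and then to verify the bookkeeping assertions (ii)--(v). First I would reduce to the case that $G$ is finitely generated: writing $P=e\mathbb{Z}[G]^{k}$ with $e=e^{2}\in M_{k}(\mathbb{Z}[G_{0}])$ for a finitely generated subgroup $G_{0}\leqslant G$, naturality of the Hattori--Stallings rank identifies $r(P)$ with the image of $r(e\mathbb{Z}[G_{0}]^{k})$ under $\operatorname{HH}_{0}(\mathbb{Z}[G_{0}])=\mathbb{Z}[C(G_{0})]\to\mathbb{Z}[C(G)]=\operatorname{HH}_{0}(\mathbb{Z}[G])$, so some $G_{0}$-conjugacy class mapping to $[g]$ has a nonzero coefficient; conjugating $G_{0}$ and the module by a suitable $\gamma\in G$, I may assume $g\in H$ and that there is a finitely generated $H\leqslant G$ and a finitely generated projective $\mathbb{Z}[H]$-module $P'$ with $r(P')(g)\neq 0$. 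This $H$ will be the subgroup required in (iii) and (iv), since conjugacy in $H$ implies conjugacy in $G$. Applying Theorem~\ref{linnell} to $(H,g,P')$ gives $m$ with $g\sim_{H}g^{s^{m}}$ for all $s\geqslant 1$; in particular $g$ has infinite order, for otherwise a prime $p$ dividing its order would make $g^{p^{m}}$ have strictly smaller order. The proof of Theorem~\ref{linnell} moreover shows the finite set $S=\{[x]\in C(H):x\neq 1,\ r(P')(x)\neq 0\}$ contains $[g]$ and is permuted by $[x]\mapsto[x^{s}]$, so $[g^{a}]\in S$ for all $a\geqslant 1$.

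Next I would build a tower inside $H$. Put $b_{1}=g$; given $b_{n-1}=ugu^{-1}$ with $u\in H$ and a conjugator $v_{n}\in H$ with $v_{n}gv_{n}^{-1}=g^{n^{m}}$, set $w_{n}=uv_{n}u^{-1}$ and $b_{n}=w_{n}^{-1}b_{n-1}w_{n}$; then $b_{n}^{\,n^{m}}=b_{n-1}$ and $b_{n}$ is again conjugate to $g$ in $H$, and $b_{n}$ has infinite order since a positive power of it equals $g$. The inclusions $\langle b_{n-1}\rangle=\langle b_{n}^{\,n^{m}}\rangle\subseteq\langle b_{n}\rangle$ make $C:=\bigcup_{n}\langle b_{n}\rangle\subseteq H$ the colimit of $\mathbb{Z}\xrightarrow{\cdot 2^{m}}\mathbb{Z}\xrightarrow{\cdot 3^{m}}\mathbb{Z}\to\cdots$, which is $\mathbb{Q}$ because $(n!)^{m}$ is cofinal for divisibility; this is (i), while $C=\langle b_{n}:n\geqslant 1\rangle$ with all $b_{n}$ conjugate to $g$ is (ii), and $C\subseteq H$ is (iii). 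For (iv), any element of $C$ lies in some $\langle b_{n}\rangle$, hence is $b_{n}^{a}$, hence $H$-conjugate to $g^{a}$; for $a\geqslant 1$ this class is in $S$, for $a=0$ it is trivial, and for $a\leqslant -1$ I would apply Theorem~\ref{linnell} to $g^{-1}$ together with the right module obtained from the $\mathbb{Z}[H]$-dual of $P'$ via the anti-automorphism $x\mapsto x^{-1}$, whose Hattori--Stallings rank is $r(P')$ followed by the involution $[x]\mapsto[x^{-1}]$ of $\operatorname{HH}_{0}$ and hence is nonzero at $g^{-1}$; this confines the classes $[g^{-a}]$, $a\geqslant 1$, to a second finite set.

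For (v) I would first prove $g\in[G,g]$. From $v_{n}gv_{n}^{-1}=g^{n^{m}}$ one computes $[v_{n}^{-1},g]=g^{\,1-n^{m}}$, so $g^{\,n^{m}-1}\in[G,g]$ for every $n\geqslant 1$; since $p\nmid p^{m}-1$ for each prime $p$, the integers $n^{m}-1$ generate $\mathbb{Z}$, whence $g\in[G,g]$. Then $g^{h}=g[g,h]\in[G,g]$ for all $h$, so $[G,g]$ contains the normal closure $\langle\langle g\rangle\rangle$; and $[h,g]=(g^{h})^{-1}g\in\langle\langle g\rangle\rangle$ gives the reverse inclusion, so $[G,g]=\langle\langle g\rangle\rangle$, which is normal in $G$. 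Each $b_{n}$ is a conjugate of $g$, so $C\subseteq[G,g]$, and as $g\in C$ we get $\langle\langle C\rangle\rangle=[G,g]$. Finally $[G,g]\subseteq[G,C]$ because $g\in C$, while for $c\in C\subseteq[G,g]\trianglelefteq G$ and $h\in G$ we have $[h,c]=(c^{h})^{-1}c\in[G,g]$, so $[G,C]=[G,g]$.

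The main obstacle I anticipate is the coherence of the tower: one must propagate the conjugacy-invariant relation $g\sim g^{n^{m}}$ down the tower so that the roots actually fit together into one copy of $\mathbb{Q}$ inside the fixed finitely generated $H$. The other delicate point is the identity $g\in[G,g]$ underpinning (v), which rests on the elementary observation that $\{n^{m}-1:n\geqslant 1\}$ generates $\mathbb{Z}$; the reduction to finitely generated $G$ and the dual-module argument for negative powers in (iv) are comparatively routine.
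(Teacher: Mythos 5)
Your proposal is correct, and for the one part the paper actually proves it coincides with the paper's argument: part~(v) rests on showing $g\in[G,g]$ by turning the conjugacies $g\sim g^{s^m}$ of Theorem~\ref{linnell} into commutator identities $[v,g]=g^{\pm(s^m-1)}$ and using that the exponents $s^m-1$ have greatest common divisor $1$ (the paper picks the two coprime values $2^m-1$ and $(2^m-1)^m-1$; you use all $n$ and the observation $p\nmid p^m-1$ --- same idea), after which the identifications $\langle\langle g\rangle\rangle=[G,g]=[G,C]=\langle\langle C\rangle\rangle$ follow exactly as you say. Where you genuinely diverge is (i)--(iv): the paper simply cites Linnell and Emmanouil, whereas you reconstruct these parts --- reduction to a finitely generated subgroup $H$ via an idempotent matrix and naturality of the Hattori--Stallings rank, the tower $b_n$ of iterated $n^m$-th roots obtained by conjugating the relation $g\sim_H g^{n^m}$, giving $C=\bigcup_n\langle b_n\rangle\cong\operatorname{colim}(\mathbb{Z}\xrightarrow{2^m}\mathbb{Z}\xrightarrow{3^m}\cdots)\cong\mathbb{Q}$, and the confinement of the classes $[g^{\pm a}]_H$ to a finite set. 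This buys a self-contained proof, at the price of invoking not just the statement but the \emph{proof} of Theorem~\ref{linnell} (the fact that the finite set $S$ of nontrivial classes with nonzero coefficient is permuted by $[x]\mapsto[x^s]$); this is genuinely needed, since the statement alone does not bound the classes $[g^a]$ for $a$ not an $m$-th power, and it is legitimate because the claim is established verbatim in the paper's proof. Two trivial remarks: the identity $[v_n^{-1},g]=g^{1-n^m}$ holds under the convention $[a,b]=a^{-1}b^{-1}ab$ (with the other convention one writes $[v_n,g]=g^{n^m-1}$), and either way the needed membership $g^{\pm(n^m-1)}\in[G,g]$ holds; and the dual-module argument for negative exponents in (iv) is unnecessary, since $[g^{-a}]$ simply lies in the finite set of inverse classes $\{[x^{-1}]:[x]\in S\}$.
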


\begin{proof}Parts (i)--(iv) may be found in~\cite[Lemma~4.1]{linnell}
and~\cite[Theorem~3.32]{emmanouil1}; only part~(v) is new. By
Theorem~\ref{linnell}, there exists a positive integer $m$ such that
$$[g]=[g^{s^m}],$$
for every positive integer $s$. In particular, we set $s=2^m-1$ so
that $(2^m-1, s^m-1) = 1$, and choose integers $k$ and $l$ such that
$$k(2^m-1) + l(s^m-1) = 1.$$
Now, there exist elements $x,y \in G$ such that $g^{2^m} = xgx^{-1}$
and $g^{s^m} = ygy^{-1}$, or equivalently, such that $g^{2^m-1} =
[x,g]$ and $g^{s^m-1} = [y,g]$. Therefore,
$$g = [x,g]^k[y,g]^l.$$
This shows that $g \in [G, g]$.  Therefore, the smallest normal
subgroup of $G$ that contains $g$, namely $\langle g \rangle [G,
\langle g \rangle]$, reduces to $[G,g]$. By~(ii) above, $C\subseteq
[G,g]=[G,C]$, which is thereby also the normal closure of $C$.
\end{proof}

\begin{proof}[Proof of Theorem~\ref{residuallycentral}]Let $g\in G$ be
a nontrivial element for which there exists a finitely generated
projective $\mathbb{Z}[G]$-module $P$ with $r(P)(g)$ nontrivial, and
let $C\subseteq G$ be the subgroup specified in
Corollary~\ref{gQdescribed}. We must show that if $N\subseteq G$ is a
normal subgroup and if the image $\bar{C}$ of $C$ in $G/N$ is
contained in the center, then that image is necessarily trivial. Now,
by the assumption on $C$, we have $[G/N,\bar{C}\,]=1$. Hence,
$$C \subseteq [G,g] = [G,C] \subseteq N$$
which shows that the image $\bar{C}$ in $G/N$ is trivial as desired.
\end{proof}

\section{Further consequences}

\label{section3}

In this section, we explore some consequences of the main theorems.

We recall that the lower central series of the group $G$ is defined to be the
descending sequence of subgroups
$$G=G_{1} \supseteq G_{2} \supseteq\cdots\supseteq G_{n}
\supseteq \cdots$$
with $G_{n} = [G, G_{n-1}]$. We write $G_{\omega} = \bigcap_{n\geqslant1}
G_{n}$, and recall that the group $G$ is defined to be residually nilpotent if
$G_{\omega}$ is trivial. Continuing, the transfinite lower central series is
defined as follows. For a successor ordinal $\alpha+1$, we have $G_{\alpha+1}
= [G, G_{\alpha}]$, while, for a limit ordinal $\beta$, we have $G_{\beta} =
\bigcap_{\alpha< \beta} G_{\alpha}$. The series stabilizes once $\alpha$ is
larger than the cardinality of $G$. Its intersection is sometimes called the
relatively perfect radical, and is the maximal subgroup $P \subseteq G$ with
the property that $P=[G,P]$. From the five-term homology exact sequence, it is
the maximal subgroup $P$ such that the canonical projection $G \to G/P$
induces an epimorphism on $H_{2}(-,\mathbb{Z})$ and an isomorphism on
$H_{1}(-,\mathbb{Z})$. For further discussion of this group,
see~\cite{rodriguezscevenels}.

\begin{cor}\label{lowercentralseries} Suppose that the intersection
$P$ of the transfinite lower central series of the group $G$ does not
contain a subgroup isomorphic to $\mathbb{Q}$. Then the group $G$
satisfies the Bass trace conjecture.
\end{cor}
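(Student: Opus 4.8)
The plan is to derive this corollary directly from Theorem~\ref{residuallycentral}. By that theorem, it suffices to verify that, under the stated hypothesis on the relatively perfect radical $P$, every subgroup $C \subseteq G$ isomorphic to $\mathbb{Q}$ has nontrivial and central image in some quotient of $G$. So I would fix such a subgroup $C \cong \mathbb{Q}$ and produce an appropriate normal subgroup $N \subseteq G$ using the transfinite lower central series.

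First I would observe that the collection of ordinals $\alpha$ for which $C \subseteq G_{\alpha}$ is an initial segment, since the transfinite lower central series is decreasing; it contains $\alpha = 1$ because $G_{1} = G$, and it is not the class of all ordinals because the hypothesis gives $C \not\subseteq P = \bigcap_{\alpha} G_{\alpha}$. Hence there is a least ordinal $\alpha$ with $C \not\subseteq G_{\alpha}$, and $\alpha \geqslant 2$. The key point — and the only step that needs a moment's care — is that this minimal $\alpha$ cannot be a limit ordinal: at a limit ordinal $\beta$ one has $G_{\beta} = \bigcap_{\gamma < \beta} G_{\gamma}$, so $C \subseteq G_{\gamma}$ for all $\gamma < \beta$ would force $C \subseteq G_{\beta}$, contradicting minimality. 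Therefore $\alpha = \gamma + 1$ for some ordinal $\gamma \geqslant 1$, with $C \subseteq G_{\gamma}$ but $C \not\subseteq G_{\gamma+1} = [G, G_{\gamma}]$.

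I would then take $N = G_{\gamma+1}$ and pass to $G/N$. Since $[G, G_{\gamma}] = G_{\gamma+1} = N$, the image of $G_{\gamma}$ in $G/N$ is central, so a fortiori the image $\bar{C}$ of $C$ is central in $G/N$; and $\bar{C}$ is nontrivial precisely because $C \not\subseteq N$. This exhibits $C$ with nontrivial central image in the quotient $G/N$, so Theorem~\ref{residuallycentral} yields the Bass trace conjecture for $G$. I do not anticipate any genuine obstacle here; the argument is purely formal once one unwinds the definition of the transfinite lower central series, the one subtlety being the successor-ordinal observation above, which relies on the series being defined by intersections at limit stages.
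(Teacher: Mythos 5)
Your proof is correct. It takes the same overall route as the paper --- verify the hypothesis of Theorem~\ref{residuallycentral} by exhibiting, for each subgroup $C\cong\mathbb{Q}$, a normal subgroup $N$ such that $\bar C$ is central and nontrivial in $G/N$ --- but the witnessing quotient is produced differently. You work directly with the transfinite series: by well-ordering there is a least ordinal at which $C$ fails to be contained in $G_\alpha$, it must be a successor $\gamma+1$ because limit stages are intersections, and then $N=G_{\gamma+1}=[G,G_\gamma]$ does the job since $C\subseteq G_\gamma$ maps to the center of $G/G_{\gamma+1}$ while $C\not\subseteq G_{\gamma+1}$. The paper instead avoids ordinals altogether: it takes $N$ to be the normal closure of $C$ and uses the characterization of $P$ recorded just before the corollary, namely that $P$ is the maximal subgroup with $P=[G,P]$, so $N\subseteq[G,N]$ would force $N\subseteq P$ and hence $C\subseteq P$; thus $C$ has nontrivial (and central) image in $G/[G,N]$. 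Your minimal-ordinal argument is essentially an unpacked proof of that maximality property in the special case needed, so the two proofs trade the paper's appeal to the stated description of the relatively perfect radical for an explicit transfinite induction; both are complete, and the only point you leave implicit (normality of $G_{\gamma+1}$ in $G$, needed to form the quotient) is standard and follows by the same transfinite induction.
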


\begin{proof}We show that $G$ satisfies the hypothesis of
Theorem~\ref{residuallycentral}. So let $D \subseteq G$ be a subgroup
isomorphic to $\mathbb{Q}$ and let $N \subseteq G$ be the normal
closure of $D$. We have $[G,N] \subseteq N$, and if also $N \subseteq
[G,N]$, then the subgroup $N$, and hence $D$, would be contained in
the intersection of the lower transfinite central series, contradicting
our hypothesis on $G$. So we conclude that $N$, and hence $D$, have
nontrivial image in $G/[G,N]$. Since this image is central, the
hypothesis of Theorem~\ref{residuallycentral} is fulfilled, as was to
be shown.
\end{proof}

The class of groups with the property of Corollary~\ref{lowercentralseries} of
course includes all hypocentral groups (those where the intersection of the
transfinite lower central series is trivial), and so in particular all
residually nilpotent groups. Previously, it was shown by
Emmanouil~\cite{emmanouil} that residually nilpotent groups satisfy the Bass
trace conjecture, if they are of finite rational homological dimension.
However, as Guido Mislin has kindly pointed out (private communication),
because of Proposition~\ref{locally, virtually}~(i) below, to affirm the
conjecture for all residually nilpotent groups, it suffices to check it for
all finitely generated residually nilpotent groups. Such groups are residually
finitely generated nilpotent, and hence, contain no divisible elements,
because all subgroups of finitely generated nilpotent groups are finitely
generated. Thus, the Bass trace conjecture for residually nilpotent groups
follows from Theorem~\ref{linnell}. For further example of groups whose
satisfaction of the Bass trace conjecture follows from Theorem~\ref{linnell},
we have the following.

\begin{example}
The mapping class group $\Gamma_{g}$ of a smooth orientable closed surface of
genus $g\geqslant3$ is a finitely generated perfect group. Therefore, every
element of $\Gamma_{g}$ lies in the intersection of the transfinite lower
central series. However, $\Gamma_{g}$ contains no copy of $\mathbb{Q}$ because
it is residually finite \cite{grossman}. Hence, the Bass trace conjecture
holds for this group; see also~\cite[Corollary~7.17]{mislin}.
\end{example}

\begin{example}
We let $G=SL(2,\mathbb{Q})$ and consider the elements
\[
g =
\begin{pmatrix}
1 & 1 \cr 0 & 1 \cr
\end{pmatrix}
\hskip8mm h =
\begin{pmatrix}
a & b \cr c & d \cr
\end{pmatrix}
.
\]
The calculation
\[
g^{k} =
\begin{pmatrix}
1 & k\cr 0 & 1\cr
\end{pmatrix}
\hskip8mm hgh^{-1} =
\begin{pmatrix}
1-ac & a^{2} \cr -c^{2} & 1+ac \cr
\end{pmatrix}
\]
shows that $g^{k}$ is conjugate to $g$ if and only if $k$ is a square. In
particular, for every positive integer $r$, we have
\[
[g] = [g^{r^{2}}].
\]
So the element $g$ satisfies the conclusion of Theorem~\ref{linnell} with
$m(g) = 2$. Now, the element $g$ clearly lies in the subgroup
\[
D = \left\{
\begin{pmatrix}
1 & x \cr 0 & 1 \cr
\end{pmatrix}
\mid x \in\mathbb{Q} \right\} ,
\]
which is isomorphic to $\mathbb{Q}$. Nevertheless, we may conclude from
Corollary~\ref{gQdescribed} that the Bass trace conjecture holds for $G$.
Indeed, since $G$ is a linear group, it follows from~\cite[Theorem~9.6]{bass1}
that for every finitely generated subgroup $H \subseteq G$, the divisible
elements in $H$ have finite order. Therefore, the group $G$ does not contain
any subgroup $C$ that satisfies both of the properties~(i) and~(iii) of
Corollary~\ref{gQdescribed}. (For an alternative route, using (v) of
Corollary~\ref{gQdescribed}, we can use the well-known fact that finitely
generated linear groups are residually nilpotent, a situation discussed above.
The affirmation of the Bass trace conjecture for the general case of linear
groups then follows from Proposition~\ref{locally, virtually} below. Of
course, it was also established in Bass' original paper~\cite{bass1}.)
\end{example}

That the groups in the next example satisfy the Bass trace conjecture does not
evidently follow from Theorem~\ref{linnell}, because they contain numerous
subgroups isomorphic to $\mathbb{Q}$.

\begin{example}
Let $F$ be a field of characteristic zero, and let $\Lambda$ be a well-ordered
set. In this situation, the McLain group $M(\Lambda;F)$ is known to be
hypocentral~\cite[Theorem~6.22]{robinson}, and so, by
Corollary~\ref{lowercentralseries} satisfies the Bass trace conjecture.
\end{example}

Next, we have two examples with $m(g) = 1$.

\begin{example}
\label{hallgroup}As pointed out in~\cite{schafer}, the conclusion of
Theorem~\ref{linnell} holds for the following group of P.~Hall~\cite{hall}
with $m = 1$. First, index all prime numbers $p_{n}$ by the set of integers.
Let $C$ be the direct sum of countably many copies of $\mathbb{Q}$, again
indexed by the set of integers. Now let $W$ be the group generated by elements
$\xi,\eta$ subject only to the requirement that all conjugates of $\eta$
commute; write $A$ for the abelian group that they generate. Hall's finitely
generated group $H$ is the semidirect product formed by letting $\xi$ act on
$C$ by shifting the $n$th copy of $\mathbb{Q}$ to the $(n+1)$th copy and by
letting $\eta$ act on all elements of the $n$th copy of $\mathbb{Q}$ by
raising to the $p_{n}$th power. The commutator subgroup $H^{\prime}$ is the
semidirect product of $C$ and $A$, while $H^{\prime\prime}= C$ is a minimal
normal subgroup of $H$. Thus, the subgroup $C$ satisfies all of
Corollary~\ref{gQdescribed}. Nevertheless, the Bass trace conjecture holds for
$H$ since it is soluble and hence amenable~\cite{berrickchatterjimislin}.
\end{example}

\begin{example}
\label{twogeneratedgroup}By~\cite[Corollary~1.2]{osin}, every countable
torsion-free group can be embedded into a (torsion-free) $2$-generated group
$G$ with a unique nontrivial conjugacy class. In particular, every element of
$g \in G$ satisfies the conclusion of Theorem~\ref{linnell} with $m(g) = 1$.
If $C \subseteq G$ is a subgroup isomorphic to $\mathbb{Q}$, then all the
conditions of Corollary~\ref{gQdescribed} hold with respect to $g\in C$
embedded in $G$ (since obviously the conjugates of any nontrivial element
generate the whole group, making $G$ simple). Since $G$ has no quotient with
nontrivial center, the hypothesis of Theorem~\ref{residuallycentral} fails to
hold. It would be interesting to decide whether or not this group $G$
satisfies the Bass trace conjecture.
\end{example}

\begin{example}
It is observed in \cite{berrick} that the Bass trace conjecture holds for all
groups if and only if it holds for all binate groups. Now, binate groups are
perfect (indeed, acyclic, with every element a commutator); and so again every
element lies in the intersection of the transfinite lower central series,
whence Theorem~\ref{residuallycentral} is not in general applicable.
\end{example}

In this context, it is worth recording the following known results on the
class $\mathscr{B}$ of groups for which the Bass trace conjecture holds. The
reader may find the proofs of~(i) and~(ii) in~\cite[Remark 1.5]{emmanouil}
and~\cite[Proposition 3.39]{emmanouil1}, respectively.

\begin{prop}
\label{locally, virtually}The Bass trace conjecture holds for the group $G$,
if either

\begin{enumerate}
\item[\textrm{(i)}] it holds for every finitely generated subgroup $H
\subseteq G$, or

\item[\textrm{(ii)}] it holds for every proper subgroup $H \subseteq G$ of
finite index.
\end{enumerate}
\end{prop}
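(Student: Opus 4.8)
The plan is to reduce both statements to the structural description of the Hattori-Stallings rank provided by the factorization through $\operatorname{TR}_0$, combined with the naturality of the cyclotomic trace map under change of group. For part~(i), suppose $r(P)(g)\neq 0$ for some nontrivial $g\in G$ and some finitely generated projective $\mathbb{Z}[G]$-module $P$. By Corollary~\ref{gQdescribed}, the element $g$ lies in a subgroup $C\cong\mathbb{Q}$ which is contained in a \emph{finitely generated} subgroup $H\subseteq G$. The key point is that the Hattori-Stallings rank is functorial with respect to the inclusion $H\hookrightarrow G$: restriction of scalars along $\mathbb{Z}[H]\hookrightarrow\mathbb{Z}[G]$ does not preserve finite generation of projectives, but \emph{induction}, i.e.\ $-\otimes_{\mathbb{Z}[H]}\mathbb{Z}[G]$, does, and on $\operatorname{HH}_0$ it is induced by the map $C(H)\to C(G)$ of conjugacy classes. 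So the strategy is instead to produce, from $P$, a finitely generated projective $\mathbb{Z}[H]$-module $Q$ whose rank detects $g$. One realizes $P$ as a retract of a free module $\mathbb{Z}[G]^n = \mathbb{Z}[H]^n\otimes_{\mathbb{Z}[H]}\mathbb{Z}[G]$, and chases the idempotent matrix defining $P$ through a finitely generated subgroup large enough to contain $H$ and the finitely many matrix entries; enlarging $H$ if necessary, we may assume the idempotent has entries in $\mathbb{Z}[H]$, giving a finitely generated projective $\mathbb{Z}[H]$-module $Q$ with $Q\otimes_{\mathbb{Z}[H]}\mathbb{Z}[G]\cong P$. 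Naturality of the trace then gives $r_G(P)=(C(H)\to C(G))_*\, r_H(Q)$, and since $g$ is not conjugate in $G$ to $1$ (it is nontrivial, and $\mathbb{Q}$ has trivial torsion so Linnell's conjugacy relation does not force $[g]=[1]$), the coefficient $r_H(Q)(h)$ is nonzero for some $h$ in the fiber of $[g]$, contradicting that the conjecture holds for $H$.

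For part~(ii), suppose $H\subseteq G$ has finite index $[G:H]=k$ and the conjecture holds for $H$, and suppose $r_G(P)(g)\neq 0$ for a nontrivial $g$. Here the relevant functor is restriction of scalars $\operatorname{res}^G_H$, which \emph{does} send finitely generated projective $\mathbb{Z}[G]$-modules to finitely generated projective $\mathbb{Z}[H]$-modules precisely because $\mathbb{Z}[G]$ is finitely generated free as a left (or right) $\mathbb{Z}[H]$-module of rank $k$. On Hochschild homology, $\operatorname{res}^G_H\colon \operatorname{HH}_0(\mathbb{Z}[G])\to\operatorname{HH}_0(\mathbb{Z}[H])$ is described combinatorially: the coefficient of a conjugacy class $[h]_H$ in $r_H(\operatorname{res}^G_H P)$ is obtained by summing $r_G(P)(g')$ over those $G$-conjugacy classes $[g']_G$ whose intersection with $H$ contains the $H$-class $[h]_H$, weighted by the number of $H$-classes into which $[g']_G\cap H$ splits relative to a fixed coset decomposition. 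The plan is: since $g$ is nontrivial, the cyclic subgroup it generates meets $H$ nontrivially, because $g^k\in H$ (as $H$ has index $k$), and $g^k\neq 1$ since $g$ lies in a copy of $\mathbb{Q}$, hence is torsion-free. One then checks that the contribution of $g$ (via $g^k$, or more carefully via the decomposition of $[g]_G\cap H$) to $r_H(\operatorname{res}^G_H P)$ is nonzero on some nontrivial $H$-conjugacy class — this requires knowing that the various $G$-classes feeding into a given $H$-class contribute with the same sign, which follows from the fact that $r(P)$ lies in the image of $K_0$ and all contributing coefficients are determined by a single idempotent trace; alternatively, and more robustly, one replaces $P$ by a single offending conjugacy class using the freeness result of Proposition~\ref{TR_0^r} applied to $\operatorname{TR}_0$. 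This contradicts the conjecture for $H$.

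The main obstacle, in both parts, is the bookkeeping on conjugacy classes under the transfer/restriction maps on $\operatorname{HH}_0$ — specifically, ruling out cancellation. In part~(i) this is not serious because inducing up can only \emph{merge} classes, never introduce signs, so a nonzero coefficient stays nonzero or adds to others of the same origin; the only subtlety is ensuring the finitely generated subgroup $H$ can be chosen to simultaneously carry the idempotent and the element $g$, which is routine. In part~(ii) the cancellation issue is genuine: a priori, distinct $G$-classes $[g']_G$ could restrict to overlapping sets of $H$-classes with opposite-sign coefficients. The clean way around this — and the step I expect to occupy most of the write-up — is to lift the whole argument to $\operatorname{TR}_0$, where by Corollary~\ref{TR_0} every element has a \emph{unique} expansion in the basis $\{V_t([g])\}$ and where the restriction-of-scalars map is $\mathbb{Z}$-linear on these bases with nonnegative-multiplicity behavior inherited from the transfer in equivariant stable homotopy; uniqueness of coordinates then prevents cancellation, and the conjecture for $H$ — applied to $\operatorname{tr}(\operatorname{res}^G_H P)$ and its image in $\operatorname{HH}_0(\mathbb{Z}[H])$ via $\operatorname{pr}_1$ and $\ell$ — delivers the contradiction. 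I would cite \cite{emmanouil} and \cite{emmanouil1} for the original treatments and present the $\operatorname{TR}_0$-based argument as the streamlined proof consistent with the methods of this paper.
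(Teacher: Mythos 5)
The paper itself offers no proof of this proposition: it simply cites \cite[Remark~1.5]{emmanouil} for~(i) and \cite[Proposition~3.39]{emmanouil1} for~(ii), so your argument must stand on its own. Part~(i) essentially does: writing $P$ as the image of an idempotent matrix whose finitely many entries lie in $\mathbb{Z}[H]$ for a suitable finitely generated subgroup $H$, so that $Q\otimes_{\mathbb{Z}[H]}\mathbb{Z}[G]\cong P$, and using that the induced map $\operatorname{HH}_0(\mathbb{Z}[H])\to\operatorname{HH}_0(\mathbb{Z}[G])$ is the $\mathbb{Z}$-linear extension of $C(H)\to C(G)$, is the standard proof. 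The detour through Corollary~\ref{gQdescribed} and the aside about Linnell's relation are unnecessary: $r_G(P)(g)\neq 0$ with $g\neq 1$ forces $r_H(Q)(h)\neq 0$ for some $h$ in the fibre over $[g]_G$, and any such $h$ is automatically nontrivial. (Your stated reason that merging of classes ``never'' causes cancellation is false in general, but harmless, since the implication you actually use --- nonzero sum implies some nonzero summand --- is trivial.)

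Part~(ii) has a genuine gap. On $\operatorname{HH}_0$, restriction of scalars sends $[g]_G$ to the sum of the $H$-classes $[x_j g x_j^{-1}]_H$ over those coset representatives $x_j$ that conjugate $g$ into $H$; this is \emph{zero} unless some $G$-conjugate of $g$ lies in $H$, and nothing in your argument rules that out. Your substitute claim that $g^k\in H$ because $[G:H]=k$ is false unless $H$ is normal (in general one only gets $g^m\in H$ for some $m\leqslant k$), and in any case powers are not conjugates, so $g^m\in H$ does not make $[g]_G\cap H$ nonempty: for $G=\mathbb{Z}$, $H=2\mathbb{Z}$ and $g$ a generator, restriction kills $[g]_G$ outright. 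By contrast, the cancellation you spend most effort on is a non-issue ($H$-conjugate elements are $G$-conjugate, so distinct $G$-classes contribute to disjoint sets of $H$-classes, each with nonnegative multiplicities), and the appeal to $\operatorname{TR}_0$ adds nothing. The missing step is exactly to show $[g]_G\cap H\neq\emptyset$, and the paper's own results supply it: by Corollary~\ref{gQdescribed}\,(i) a nontrivial $g$ in the support of $r_G(P)$ lies in a subgroup $C\cong\mathbb{Q}$, and since $\mathbb{Q}$ has no proper finite-index subgroups, $C\cap H$ of finite index in $C$ forces $C\subseteq H$, hence $g\in H$; equivalently, Theorem~\ref{linnell} applied with $s$ equal to the order of the image of $g$ in the finite quotient by the normal core of $H$ shows that this image is trivial. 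With $g\in H$, the transfer formula gives $r_H(P|_H)(h)=c_h\,r_G(P)(g)$ for the $H$-classes $h\subseteq[g]_G\cap H$, with integers $c_h\geqslant 0$ summing to at least $1$, so some nontrivial $h$ has nonzero coefficient, contradicting the conjecture for $H$; this is in substance Emmanouil's argument. (Note also that, read literally, the hypothesis of~(ii) is vacuous when $G$ has no proper finite-index subgroup; the intended content, and what the above proves, is that the conjecture descends to $G$ from any finite-index subgroup for which it holds.)
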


Thus, the class $\mathscr{B}$ is locally and virtually closed. However, it is
not known whether the class $\mathscr{B}$ is residually closed, that is,
whether a group $G$ with the property that for every $g\in G$, there exists a
group homomorphism $f\colon G\rightarrow G'$ with $G' \in \mathscr{B}$
and $f(g)$ nontrivial is necessarily in $\mathscr{B}$. On the other
hand, it is easy to verify that the class of all groups $G$ in which
every divisible element maps nontrivially to the center of some
quotient of $G$ is a residually closed class.

Finally, we make the following amusing observation concerning the relation
between the strong and weak forms of the Bass trace conjecture. (It is known
that to affirm the weak Bass trace conjecture for all groups, it suffices to
do so for a single group, described in~\cite{berrick}.)

\begin{prop}\label{classprop}The strong form of the Bass trace
conjecture holds for all 
groups if
\begin{enumerate}
\item[\textrm{(i)}] the class $\mathscr{B}$ of groups for which the strong
form of the Bass trace conjecture holds is closed under taking
finitely generated subgroups; and
\item[\textrm{(ii)}] the weak form of the Bass trace conjecture holds for all
finitely generated groups.
\end{enumerate}
\end{prop}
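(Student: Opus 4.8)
The plan is to reduce to finitely generated groups and then, for each such group, to build a finitely generated overgroup with essentially one nontrivial conjugacy class, on which the weak form of the conjecture forces the strong form; hypothesis~(i) then transports the conclusion back to the original group. First, by Proposition~\ref{locally, virtually}(i) it suffices to show that every finitely generated group belongs to the class $\mathscr{B}$. So fix a finitely generated group $G$, and use the embedding theorem of Osin~\cite[Corollary~1.2]{osin} to embed $G$ into a $2$-generated group $U$ in which two elements are conjugate if and only if they have the same order. Being $2$-generated, $U$ is finitely generated, so hypothesis~(ii) applies: the weak form of the Bass trace conjecture holds for $U$.

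The next step is to see that, for this $U$, the weak form already implies the strong form. The point is that Theorem~\ref{linnell} alone forces $r(P)(\gamma)=0$ whenever $\gamma$ has finite order $k>1$: taking a prime $p$ dividing $k$ together with the integer $m=m(\gamma)$ of Theorem~\ref{linnell}, the element $\gamma^{p^{m}}$ has order strictly smaller than that of $\gamma$, so it cannot be conjugate to $\gamma$, contradicting $[\gamma]=[\gamma^{p^{m}}]$. Hence, for every finitely generated projective $\mathbb{Z}[U]$-module $P$, the Hattori--Stallings rank $r(P)$ is supported on $[1]$ together with at most one further conjugacy class, namely the class $[x]$ of infinite-order elements of $U$ (if $U$ has any); that is, $r(P)=r(P)(1)[1]+r(P)(x)[x]$. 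Now the weak form $\sum_{[\gamma]\in C(U)\smallsetminus\{[1]\}}r(P)(\gamma)=0$ reads $r(P)(x)=0$, so the strong form holds for $U$, i.e.\ $U\in\mathscr{B}$. Finally, since $G$ is a finitely generated subgroup of $U$, hypothesis~(i) gives $G\in\mathscr{B}$; as $G$ was an arbitrary finitely generated group, this finishes the proof.

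I expect the argument to involve no real computation; its content is having the right embedding theorem at hand. The one delicate point is that the cited form of Osin's theorem must apply to groups that have torsion, so that $U$ realizes the prescribed conjugacy behaviour on torsion elements as well as on infinite-order ones. Once that is granted, the torsion reduction supplied by Theorem~\ref{linnell} makes the passage from the weak to the strong form for $U$ essentially automatic, and the whole proof becomes an assembly of hypotheses~(i) and~(ii), Theorem~\ref{linnell}, Proposition~\ref{locally, virtually}(i), and the embedding theorem --- which is presumably why the observation is called ``amusing''.
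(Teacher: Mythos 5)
Your proof is correct and follows essentially the same route as the paper: reduce to finitely generated $G$ by Proposition~\ref{locally, virtually}~(i), embed $G$ in a finitely generated group in which any two elements of the same order are conjugate, observe that the nontrivial conjugacy classes supporting the Hattori--Stallings rank consist of infinite-order elements (the paper deduces this from Corollary~\ref{gQdescribed}, you from Theorem~\ref{linnell} directly, which is an equally valid variant), and then invoke hypotheses~(ii) and~(i). The delicate point you flag is resolved simply by citing Osin's Theorem~1.1 of~\cite{osin} (any countable group embeds in a $2$-generated group where elements of equal order are conjugate), which is what the paper uses, rather than Corollary~1.2, whose statement is restricted to torsion-free groups.
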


\begin{proof}By Proposition~\ref{locally, virtually}~(i), it suffices
to verify the strong Bass trace conjecture for every finitely
generated group $G$. Now, by~\cite[Theorem~1.1]{osin}, the group $G$
embeds in a finitely generated group $T$ in which any two elements of
the same order are conjugate. But from Corollary~\ref{gQdescribed}
applied to $T$, we know that for every finitely generated projective
$\mathbb{Z}[T]$-module $P$, all nontrivial elements $x \in T$ with
$r(P)(x)$ nonzero have the same, infinite, order. Thus, they lie in a
single conjugacy class in $T$. Therefore, the assumption~(ii) that
the weak Bass trace conjecture holds for $T$, implies that
$T \in \mathscr{B}$. Finally, from the assumption~(i) applied to the
group $T$, we conclude that $G \in \mathscr{B}$.
\end{proof}

\acknowl{This paper was written in part while the second author
visited the National University of Singapore. He would like to thank
the university for its hospitality and support. He also thanks Holger
Reich for helpful discussions.}

\end{document}